\let\proof\@undefined
\let\endproof\@undefined
\let\theorem\@undefined
\let\endtheorem\@undefined
\let\proof\relax 
\let\endproof\relax
\newcommand{\removelatexerror}{\let\@latex@error\@gobble}
\newcommand\numberthis{\addtocounter{equation}{1}\tag{\theequation}}
\theoremstyle{definition}
\newtheorem{definition}{Definition}
\newtheorem{assumption}{Assumption}
\theoremstyle{plain}
\declaretheorem[name=Theorem]{thm}
\declaretheorem[name=Proposition]{prop}
\theoremstyle{remark}
\newtheorem{remark}{Remark}
\newtheorem{example}{Example}
\DeclareMathOperator{\dom}{dom}
\DeclareMathOperator{\interior}{int}
\DeclareMathOperator{\lse}{lse}
\DeclareMathOperator{\argmax}{argmax}
\DeclareMathOperator{\A}{\mathbf{A}}
\DeclareMathOperator{\B}{\mathbf{B}}
\DeclareMathOperator{\C}{\mathbf{C}}
\DeclareMathOperator{\D}{\mathbf{D}}
\let\P\relax
\DeclareMathOperator{\P}{\mathbf{P}}
\begin{document}
\title{\LARGE On Passivity, Reinforcement Learning and Higher-Order Learning in Multi-Agent Finite Games} 
\author{Bolin Gao and Lacra Pavel%
\thanks{This work was supported by NSERC Grant (261764). 
	B. Gao and L. Pavel are with the Department of Electrical and Computer Engineering, University of Toronto, 
	Canada. 
	{\tt\small bolin.gao@mail.utoronto.ca, pavel@ece.utoronto.ca}}%
}

\maketitle

\IEEEpeerreviewmaketitle
\begin{abstract}
	In this paper, we propose a passivity-based methodology for analysis and design of reinforcement learning in multi-agent finite games. Starting from a known exponentially-discounted reinforcement learning scheme,  we show that convergence to a Nash distribution can be shown in the class of games characterized by the monotonicity property of their (negative) payoff. We further exploit passivity to propose  a class of higher-order schemes that  preserve convergence properties, can improve the speed of convergence and can even converge in cases whereby their first-order counterpart fail to converge. We demonstrate these properties through numerical simulations for several representative games.  
\end{abstract}

\section{Introduction}

In multi-agent reinforcement learning in games, an agent repeatedly adjusts his strategy in response to a stream of payoffs which are in turn dependent on the actions of the other agents. The objective  is for agents to  arrive at a strategy profile that yields the best obtainable outcome for each of them, under information-constraints or in the presence of noise. Whether such an action profile can be reached depends on the learning process used by the collection of these agents. 
  Learning in finite games typically involves  discrete-time stochastic processes, where stochasticity arises  from the agents' randomized choices, \cite{Erev,Fudenberg,Leslie,Chasparis_and_Shamma,Coucheney,Cominetti,Laraki}. A key approach to analyzing such processes is based on the ordinary differential equation (ODE) method of stochastic approximation, a method which relates  their  behaviour to that of a ``mean field" ODE, \cite{Benaim1999}. Motivated by  this, we follow  \cite{Coucheney}, \cite{Merti_Learning,ShammaTAC2005,Sorin2009}  and consider a reinforcement learning scheme directly in continuous-time. By so doing, we are  able to focus  on the relationship between reinforcement learning, convex analysis, and passivity. 
\footnote{One could use the results developed in this paper to analyze discrete-time learning schemes as in  \cite{Leslie}, \cite{Coucheney},  but we leave this for future work.} 

Our starting point is a variant of the continuous-time exponentially-discounted learning (EXP-D-RL) in \cite{Coucheney}, versions of which are also  known as  the exponential-weight algorithm \cite{Sorin2009}, or Q-learning, \cite{Leslie}, \cite{Cominetti}. Under this continuous-time learning process, each agent maintains a vector of scores for all his actions, based on aggregation of his exponentially-discounted  stream of payoffs. These scores are then converted into mixed strategies using a static logit  (soft-max) rule which assigns choice probabilities proportionally to the exponential of each action's score. The resulting learning  dynamics describes the evolution of the scores (dual variables), rather than of the  mixed strategies (primal variables).  The same score dynamics results also from stochastic-approximation of a Q-learning scheme, which was shown  in \cite{Leslie} to converge to a Nash distribution (logit equilibria) in 2-player zero-sum  and partnership (potential) games. It is also related to the scheme proposed in \cite{Cominetti} for traffic games  in a repeated-game setup and  shown to converge to a Nash distribution in $N$-player potential games. The strategy dynamics induced by this score dynamics  is analyzed in \cite{Coucheney} and proved to converge  towards logit equilibria in potential games. With  the exception of \cite{Leslie}, the majority of these works  have focused exclusively on convergence  
in potential games,  \cite{Chasparis_and_Shamma,Coucheney,Cominetti}, \cite{Zino}. 
In contrast, less attention has been paid to stable games \cite{Hofbauer,Sandholm}, which encompass zero-sum games, potential games with concave payoffs and include well-known examples from evolutionary game theory such as the Rock-Paper-Scissors (RPS) game. 

Motivated by the above, in this paper, we  exploit passivity techniques and 
the natural monotonicity property associated with this class of games to show convergence to a Nash distribution. 
 The use of passivity to investigate game dynamics was first proposed in \cite{Fox_and_Shamma} for population games, based on the notion of $\delta$-passivity. The authors showed that certain game dynamics and the class of stable games, naturally satisfy this type of passivity.  The coupling between a $\delta$-passive system with a stable game implies a stable behaviour in the closed-loop solution. \cite{Mabrok} showed that if an evolutionary dynamic does not satisfy a
passivity property, then it is possible to construct a higher-order stable game that results in instability.   
Here 
we use an alternative concept, that of equilibrium-independent passivity, \cite{MA11}. This allows us to directly address convergence towards equilibria. We note that equilibrium-independent passivity was used recently in \cite{DianTAC} for  continuous-kernel games,  to relax the assumption on perfect-information on other players' actions.

\textit{Contributions.} 
Our contributions are twofold: we show (1) that a passivity framework can be used to prove convergence of reinforcement learning in finite games, and (2) that its principles can be used towards designing higher-order learning dynamics that preserve convergence to a Nash distribution.  Our approach is  based on  reformulating  the overall  learning dynamics of all agents as a feedback  interconnected system. We show that the continuous-time EXP-D-RL scheme, \cite{Coucheney},  can be naturally posed as a payoff-feedback  system in the dual space, 
 where the  forward system 
 satisfies more than passivity, namely it is output strictly (equilibrium-independent) passive. We exploit its particular storage function, related to a Bregman divergence, as a Lyapunov function and  
 show convergence of EXP-D-RL to a Nash distribution (logit equilibrium) in  any $N$-player game  for which the (negative) payoff is merely monotone, not necessarily strict.  To the best of our knowledge, this is the first convergence result for such games.  This class of games corresponds to the class of  stable games in population games \cite{Sandholm}, and to  the class of monotone games in continuous games, \cite{Facchinei}. It subsumes,  in the case of finite-action games,  potential games with concave potential, 2-player zero-sum games,  as in \cite{Leslie}, as well as the standard 2-player RPS game. 
 Key to our approach is the fact that we analyze convergence based on the natural, score dynamics, which are in the dual  (payoff) space, as in \cite{Cominetti}. This is contrast to analyzing the induced strategy dynamics as  done in \cite{Coucheney}, and is unlike the indirect analysis done in \cite{Leslie}, via connection to the smooth best-response. Unlike \cite{Coucheney,Cominetti}, we show convergence to a Nash distribution in games that go beyond the class of potential games,  for example the RPS games or the Shapley game. 
Furthermore, we exploit cocoercivity of the  soft-max  to show convergence even for hypo-monotone games (corresponding to  unstable games in \cite{Sandholm}), such as unstable RPS games and Shapley's game, is the temperature parameter is above a certain threshold. To achieve this, we balance the game shortage of passivity (hypo-monotonicity) by the excess passivity coming from the soft-max map (cocoercivity). 


In the second part of the paper, we build on the passivity interpretation and propose a method to design higher-order extensions of  EXP-D-RL. 
Higher order dynamics, via the introduction of auxiliary states,  can have different properties. They can have significant benefits fostering convergence in larger classes of games, as shown in \cite{ShammaTAC2005,ArslanShamma2006} for fictitious-play/gradient-play and in evolutionary games. In reinforcement learning, higher-order  extensions of the un-discounted reinforcement learning have been proposed in \cite{Laraki} based on second or  $n$-th order payoff-integration (equivalent to a cascade modification  of the first-order replicator dynamics by a chain of integrators). These dynamics have been shown to lead to the elimination of weakly dominated strategies, followed by the iterated deletion of strictly dominated strategies, a property not exhibited by standard replicator dynamics. However, as shown in \cite{Mabrok}, such cascade augmentation does not guarantee that passivity/convergence properties are preserved when extending from first to higher-order  dynamics. Our second result shows that if higher-order dynamics are built by feedback modification via a passive system that preserves the equilibrium point, convergence to a Nash distribution can be guaranteed in the same class of games. 
We explicitly build a second-order learning scheme  based on this method, by specifying a particular LTI positive-real system for the feedback modification path. 
We show numerically that these higher-order dynamics can converge faster and, in some cases, can converge in larger classes of games (more hypo-monotone) than the first-order scheme. 
A short version of this paper will appear in \cite{Bo_LP_CDC2018}.

The paper is organized as follows. Section II provides background material. Section III introduces the continuous-time score-based EXP-D-RL reinforcement learning scheme. Section IV provides convergence analysis of the first-order EXP-D-RL scheme.   
Section V proposes and analyzes a class of higher-order dynamics. Section VI discusses connections to population games. Section VII discusses several examples and presents simulation results.  
Section VIII provides the conclusions. 

\vspace{-0.2cm}
\section{Background}


\vspace{-0.2cm}
\subsection{Convex Optimization and Monotone Operator Theory}

The following is from \cite{Facchinei}, \cite{Boyd}. Let $z \in \mathbb{R}^n$,  $z\!=\! [z_1,...,z_n]^\top  $, also denoted as  $z\!=\! (z_1,...,z_n)$  or $z\!= \!(z_i)_{i =\{1,\dots,n\}}$. 
We assume that $\mathbb{R}^n$ is equipped with the standard inner product $\langle z, z^\prime \rangle \coloneqq \sum_{i =1}^n z_iz^\prime_i = z^\top   z^\prime$ and the induced 2-norm $\|z\|_2\coloneqq \sqrt{\langle z, z \rangle}$.  

An operator (or mapping) $F: \mathcal{D} \subseteq \mathbb{R}^n \to \mathbb{R}^n$ is said to be monotone on $\mathcal{D}$ if $(F(z) - F(z^\prime))^\top (z-z^\prime) \geq 0, \forall z,z^\prime \in \mathcal{D}$. It is strictly monotone if strict inequality holds $ \forall z,z^\prime \in \mathcal{D}, z \neq z^\prime$. 
$F$ is $\mu$-strongly monotone if $(F(z) - F(z^\prime))^\top (z-z^\prime) \geq \mu \| z - z^\prime \|_2^2, \forall z,z^\prime \in \mathcal{D}$, for some $\mu >0$. 
We note that a $C^1$ function $f$ is convex if and only if $(\nabla f(z) - \nabla f(z^\prime))^\top (z - z^\prime) \geq 0, \forall z, z^\prime \in \dom f$, where $\nabla f$ is its gradient,  and strictly convex if and only if $	(\nabla f(z) - \nabla f(z^\prime))^\top (z - z^\prime) > 0, \forall z, z^\prime \in \dom f, z \neq z^\prime$.  
$F: \mathcal{D} \subseteq \mathbb{R}^n \to \mathbb{R}^n$ is L-Lipschitz if there exists a $L > 0$ such that $\|F(z) - F(z^\prime)\|_2 \leq L \|z - z^\prime\|_2, \forall z, z^\prime \in \mathcal{D}$. $F$ is  nonexpansive  if $L=1$, and contractive if $L \in (0,1)$.  $F: \mathcal{D} \subseteq \mathbb{R}^n \to \mathbb{R}^n$ is  \textit{$\beta$-cocoercive} if there exists a $\beta > 0$ such that $(F(z) - F(z^\prime)^\top (z-z^\prime ) \geq \beta\|F(z) - F(z)^\prime\|_2^2, \forall z, z^\prime \in \mathcal{D}$. $F$ is referred to as \textit{firmly nonexpansive} for $\beta=1$.

\vspace{-0.2cm}
 \subsection{Equilibrium Independent Passivity}
The following  are obtained from \cite{MA11}, \cite{DN13}. Consider $\Sigma$ \vspace{-0.2cm}
\begin{align} \label{eq:DynOverall}
\Sigma :
  \begin{cases}
    \dot{z} = f(z,u), & \\
    y = h(z,u), & \\
  \end{cases}
\end{align}
with $z\in \mathbb{R}^n$,  $u \in \mathbb{R}^q$, $y \in \mathbb{R}^q$, $f$ locally Lipschitz, $h$ continuous. Consider a differentiable  function $V:\mathbb{R}^n \!\rightarrow \! \mathbb{R}$. The time derivative of $V$ along solutions of (\ref{eq:DynOverall}) is  $\dot{V}(z) = \nabla  V(z)^\top  f(z,u)$ or just $\dot{V}$. Let  $\overline{u}$, $\overline{z}$, $\overline{y}$ be  an equilibrium condition, such that $0\!=\!f(\overline{z},\overline{u})$, $\overline{y}\!=\!h(\overline{z},\overline{u})$.  Assume there exists $ \Gamma\subset\mathbb{R}^q$ and a continuous function $k_z(\overline{u})$  such that for any constant  $\overline{u} \in \Gamma$,   $f(k_z(\overline{u}),\overline{u}) \!=\! 0$ (basic assumption).  
\begin{definition}\label{def:EIP}
System $\Sigma$ (\ref{eq:DynOverall}) is Equilibrium Independent Passive (EIP) if it is passive with respect to $\overline{u}$ and $\overline{y}$; that is for every $ \overline{u} \in \Gamma$ there exists a differentiable, positive semi-definite storage function $V_{\overline{z}}: \mathbb{R}^n \to \mathbb{R}$ such that $V_{\overline{z}}(\overline{z}) = 0$ and, for all $u \in \mathbb{R}^q$, $z \in \mathbb{R}^n$,\vspace{-0.2cm}
		\begin{equation}\label{eq:EIP}
		\dot{V}_{\overline{z}}(z)  \leq  (y-\overline{y})^\top  (u-\overline{u}),
		\end{equation}
		$\!\Sigma$  is output-strictly EIP (OSEIP) if there is a $\!\beta>0$ such that 	\vspace{-0.2cm}
				\begin{equation}\label{eq:OSEIP}
		\dot{V}_{\overline{z}}(z)  \leq  (y-\overline{y})^\top  (u-\overline{u}) - \beta \|y-\overline{y}\|_2^2,
				\end{equation}
\end{definition}
EIP requires that  \eqref{eq:EIP} holds for every $ \overline{u} \in \Gamma$ ($\Sigma$ to be passive independent of the equilibrium point), while traditional passivity, \cite{Khalil} requires that it holds only for a particular $ \overline{u}$ (usually associated with the origin as equilibrium). 
EIP properties help in deriving stability and convergence properties for feedback systems without requiring exact knowledge of an equilibrium point, but rather  only that it exists. The parallel interconnection of  two EIP systems is an EIP system, and the feedback interconnection of two EIP systems that satisfies the basic assumption is an EIP system (cf. Property 2 and 3 in \cite{MA11}). 
  When system $\Sigma$ is just a static map, EIP   is equivalent to incrementally passivity. 
  and to monotonicity. A static nonlinear function $y=F(u)$ is defined to be EIP (OSEIP) 
  if $F$  is monotone ($\beta$-cocercive). 
A linear (output strictly) passive system, $\dot{z} = Az + Bu$, $y = Cz + Du$, with $(A, B)$  controllable, $(A, C)$ observable, and $A$ invertible is (OS)EIP (cf. Ex. 1 in \cite{MA11}). This can be shown using $V(z-\overline{z})$, where $V$ is the quadratic storage function associated with the passivity of the linear system relative to the origin equilibrium, by direct application of the KYP lemma (cf. Section 6.4 of \cite{Khalil}). The additional requirement of invertibility of $A$ is necessary to satisfy the basic assumption on the existence and continuity of $k_z$, which is defined by $k_z(u) = -A^{-1}Bu$. The equilibrium input-output map is defined by $k_y(u) = (- CA^{-1}B + D)u$.

\vspace{-0.2cm}
\subsection{Games in Normal Form}

Consider a game $\mathcal{G}$ between a set of players (agents) $\mathcal{N} =  \{1, \ldots, N\}$, where each player $p \in \mathcal{N}$ has a finite set of actions (or pure strategies) $\mathcal{A}^p $, and a payoff $\mathcal{U}^p: \mathcal{A} \to \mathbb{R}$, with  $\mathcal{A} = \prod_{p \in \mathcal{N}} \mathcal{A}^p$ the overall action set of all players, \cite{Merti_Learning}.  

 Let  $|  \mathcal{A}^p| = n^p$ and $n=\sum_{p \in \mathcal{N}} n^p$. Without loss of generality we identify $\mathcal{A}^p$ as the  corresponding index set, i.e., $ \mathcal{A}^p= \{1, \ldots, n^p\}$ and denote a generic action  as   $i \in \mathcal{A}^p$.  
Let $x^p = (x^p_i)_{i \in \mathcal{A}^p}$ denote the mixed strategy of player $p$, a probability distribution over his set of actions $\mathcal{A}^p$. Then $x^p \in \Updelta^p$, where 
$\Updelta^p \coloneqq \{x^p \in \mathbb{R}^{n^p}_{\geq 0}| \|x^p\|_1 = 1\}$ 
 is the set of mixed strategies for player $p$. A mixed strategy profile is denoted as $x = (x^1,\ldots, x^N) \in \Updelta$, where $\Updelta \coloneqq \prod_{p \in \mathcal{N}} \Updelta^p$ 
 is called the game's strategy space. 
  We also use the shorthand notation $x=(x^p; x^{-p})$  where $x^{-p}$ is the strategy profile of the other players except $p$. 
 Player $p$'s expected  payoff  to using $x^p$ in the mixed  strategy profile $x = (x^1,\ldots, x^N)\in \Updelta$ is 
    \vspace{-0.2cm}
\begin{equation}\label{eq_u_calU}
\mathcal{U}^p(x) = \sum\limits_{i_1 \in \mathcal{A}^1} \cdots \sum _{i_N \in \mathcal{A}^N} \mathcal{U}^p(i_1, \ldots, i_N) x^1_{i_1}  \ldots x^N_{i_N},
\end{equation}
where  
 $\mathcal{U}^p(\bm{i}) \equiv \mathcal{U}^p(i_1, \ldots, i_N)$ denotes his payoff in the pure (action) profile $\bm{i}=\{i_1,  \ldots, i_N\} \in \mathcal{A}$, with 
 $i_p \in \mathcal{A}^p$. 
We denote by  $U_i^p(x) \equiv \mathcal{U}^p(i; x^{-p}) \equiv \mathcal{U}^p(x^1, \ldots, i, \ldots, x^N)$,
his  expected payoff corresponding to using pure strategy $i \in \mathcal{A}^p$ in  the mixed profile $x \in \Updelta$. 
Note that we can write \eqref{eq_u_calU} as \vspace{-0.2cm}
\begin{equation}\label{eq_u_U}
\mathcal{U}^p(x) 
 =  \sum\limits_{i \in \mathcal{A}^p} \mathcal{U}^p(i; x^{-p}) \, x^p_i = \sum\limits_{i \in \mathcal{A}^p} x^p_i \,U_i^p(x),
\end{equation}
or $\mathcal{U}^p(x)  = {x^p}^\top   U^p(x)$, 
where  $U^p(x) = (U_i^p(x))_{i \in \mathcal{A}^p}  \in \mathbb{R}^{n^p}$ is called the payoff  vector of player $p$ at $x \in \Updelta$, 
 indicating the duality pairing between $x^p$ and $U^p$, \cite{Merti_Learning}. With the players' (expected) payoffs $\mathcal{U}^p : \Updelta \rightarrow \mathbb{R}$, the tuple $(\mathcal{N}, \Updelta, \mathcal{U})$ is called  the mixed extension of $\mathcal{G}$ also denoted by $\mathcal{G}$.



\begin{definition}
	\label{def:Nash_Equilibrium}
	 A mixed strategy profile $x^\star \in \Updelta$ is a Nash equilibrium of game $\mathcal{G}$ if \vspace{-0.2cm}
	\begin{equation}\label{eq:defNash_Equilibrium}
 \mathcal{U}^p(x^\star) \geq  \mathcal{U}^p(x^p; {x^\star}^{-p}), \, \forall  x^p \in \Updelta^p, \forall p \in \mathcal{N}.
	\end{equation}
\end{definition}
\noindent Define the \textit{mixed best-response map} of player $p$, $BR^p: \Updelta \rightarrow \Updelta^p$, $x \mapsto \textstyle{\argmax}_{x^p \in \Updelta^p} \, \mathcal{U}^p(x)$. 
Then $x^\star $ satisfies  \vspace{-0.2cm}
	\begin{equation}\label{eq:NE_BR}
	{x^\star}^p \in BR^p({x^\star}), \, \forall p \in \mathcal{N}, 
	\end{equation}
or  ${x^\star} \in BR({x^\star})$, where $BR = (BR^p)_{p \in \mathcal{N}}$, i.e., ${x^\star}$ is a fixed-point of the overall best-response map of all players. 
Alternatively, by  \eqref{eq_u_U}, \eqref{eq:defNash_Equilibrium}, ${{x^\star}^p}^\top  U^p(x^\star) \geq {{x}^p}^\top  U^p(x^\star),  \forall  x^p \in \Updelta^p, \forall p \in \mathcal{N},$  or, by concatenating them, 	\begin{equation}\label{eq:NE_VI}	-(x-x^\star)^\top  U(x^\star) \geq 0,  \forall  x \in \Updelta	\end{equation}where  $U: \Updelta \to \mathbb{R}^n$,  $U(x)= (U^p(x))_{p \in \mathcal{N}} $,  denotes  the payoff vector  of the entire set of players, or the \emph{game (static) map}. Thus  $x^\star$ can be equivalently characterized  as a solution of the variational inequality, $\textup{VI}(-U, \Updelta)$,  \eqref{eq:NE_VI},  \cite{Facchinei}.

\vspace{-0.2cm}
\section{Exponentially-Discounted Learning (EXP-D-RL)} 

 


In the following, we describe the score-based reinforcement learning (RL) scheme, modeled in continuous-time as in  \cite{Coucheney,Merti_Learning,Laraki}.  Each player keeps  a \textit{score} $z^p$  based on his received payoff  $U^p$, and maps it into a  strategy $x^p\in \Delta^p$.  
He plays the game $\mathcal{G}$ according to the strategy $x^p$. This process is repeated indefinitely, with an infinitesimal time-step between each stage; hence can be modeled in continuous-time, 
as a three stage process, described  as follows:\\
(1) \textbf{Assessment Stage:} Each player $p$ keeps a vector \textit{score variable} $z^p(t) \in \mathbb{R}^{n^p}$,  $z^p = (z^p_i)_{i\in \mathcal{A}^p}$, with each $i$-th action  $i \in  \mathcal{A}^p$ having a score $z_i^p \in \mathbb{R}$. Starting from an initial score, he updates it based on  \textit{exponentially-discounted aggregation (learning)} (EXP-D-RL) of his own payoff stream, \vspace{-0.3cm} 
\begin{equation}
z_i^p(t) =e^{-\gamma t }z^p_i(0) +  \gamma \int\limits_{0}^t  e^{-\gamma (t - \tau)} u_i^p(\tau) \mathrm{d}\tau,
\label{def:exponential_discount_learning}\tag{EXP-D-RL}
\end{equation} 
or, in differential form,  \vspace{-0.2cm}
\begin{equation}
\dot z^p_i = \gamma(u^p_i - z^p_i), \quad z^p_i(0) \in \mathbb{R}, \, \, \, \forall i \in  \mathcal{A}^p.
\label{eqn:zp_i}
\end{equation}
where $u^p_i(t) = U^p_i(x(t))$,  $\gamma>0$ is the learning rate and   $z^p_i(0)$ is the bias towards strategy $i$ at the beginning of the game.   
EXP-D-RL  can be represented by a  scalar-valued transfer function $G(s) = \dfrac{\gamma}{s+\gamma}$,  similar to  the scheme studied in \cite{Coucheney}, where $G(s) = \dfrac{1}{s+T}$. The case $G(s) = \dfrac{1}{s}$, i.e., integration of the (un-discounted) payoff is studied in  
\cite{Merti_Learning},\cite{Laraki}. 
 \\
(2) \textbf{Choice Stage}: Each player  $p$ maps his own score  $z^p \in \mathbb{R}^{n^p}$, into a mixed strategy $x^p \in \Updelta^p$ using a \textit{choice map}, e.g. 
the \textit{best-response choice}, $M^p: z^p \mapsto \textstyle{\argmax}_{x^p \in \Updelta^p} \thinspace {x^p}^\top  z^p$. 
To ensure that  $M^p$ is single-valued, an at-least strictly convex function $\psi^p$ called \textit{regularizer} is used, \cite{Merti_Learning}, which yields the \textit{regularized/smooth best-response choice}, \vspace{-0.2cm}
\begin{equation} 
	\sigma^p: z^p \mapsto \textstyle{\argmax}_{x^p \in \Updelta^p} \thinspace \{{x^p}^\top  z^p - \psi^p(x^p)\}. 
	\label{def:regularized_br_map}
\end{equation}
Depending on the context, the regularizer is also referred to as admissible deterministic perturbation, penalty function, smoothing function, barrier function or Bregman function.  For detailed construction of the regularizer, \cite{Merti_Learning}. 
We consider the commonly used (negative) Gibbs entropy,\vspace{-0.2cm}
\begin{equation} 
	\psi^p(x^p) \coloneqq \epsilon \textstyle{\sum}_{j \in \mathcal{A}^p} x^p_j \log(x^p_j), \epsilon > 0,
	\label{def:neg_entropy}
\end{equation}
for which 
the choice map \eqref{def:regularized_br_map}  is the \textit{soft-max function} \cite{Merti_Learning},\vspace{-0.2cm}
\begin{equation}
	\sigma^p(z^p) \! \coloneqq  \!\frac{1}{\sum_{j \in \mathcal{A}^p} \!\exp(\frac{1}{\epsilon}z^p_j)} \!\begin{bmatrix}\exp(\frac{1}{\epsilon} z^p_1)\! \ldots\! \exp(\frac{1}{\epsilon} z^p_n) \!\end{bmatrix}^\top 
	\label{def:soft-max}
\end{equation}where $\epsilon > 0$. Note that $\sigma^p: \mathbb{R}^{n^p} \rightarrow \interior(\Updelta^p)$,  where  $\interior(\Updelta^p) =  \{x^p \in \mathbb{R}^{|\mathcal{A}^p|}| \|x^p\|_1 = 1, x^p_i > 0\}$ is the (relative) interior of the simplex $\Updelta^p$. 
$\epsilon $ is typically referred to as the \textit{temperature parameter}. For $\epsilon = 1$, \eqref{def:soft-max} is known as the \textit{standard soft-max function}. 
As $\epsilon \to \infty$, actions are selected with uniform probability, and as $\epsilon \to 0$, the soft-max function selects the action associated with the highest score, provided that the difference between any two scores is not too small. 
With $\sigma^p$, the mixed strategy for player $p$ is taken as,\vspace{-0.2cm}
\begin{equation} \label{eqn:mapping_score_strategy_population} x^p(t) = \sigma^p(z^p(t)) := \begin{bmatrix} \sigma^p_1(z^p),  \ldots,\sigma^p_{n^p}(z^p) \end{bmatrix}^\top , \end{equation} where $x^p_i = \sigma^p_i(z^p), \forall i \in \mathcal{A}^p$. 

\noindent 
(3) \textbf{Game Stage}: Each player $p$ plays the game $\mathcal{G}$ according to his strategy $x^p$  and records his own payoff  vector $u^p \coloneqq U^p(x) = (U_i^p(x))_{i \in \mathcal{A}^p} $, where $u_i^p (t) = U_i^p(x(t))$, $\forall i \in \mathcal{A}^p$.   
Then, with $z^p = (z^p_i)_{i \in \mathcal{A}^p}$,  \eqref{eqn:zp_i}, $x^p$, \eqref{eqn:mapping_score_strategy_population}, and $\sigma^p$ in \eqref{def:soft-max}, the EXP-D-RL scheme for player $p \in \mathcal{N}$ is written as, \vspace{-0.2cm}
\begin{equation}\label{eqn:first_order_score_dyn} 
\begin{array}{ll}
\dot z^p &= \gamma( U^p(x) - z^p),  \quad z^p(0) \in \mathbb{R}^{n^p}\\ 
x^p &= \sigma^p(z^p), 
\end{array}
\end{equation}
\vspace{-0.4cm}\begin{remark} 
Similar forms of ``exponentially-discounted" score dynamics have been investigated in \cite{Coucheney}\cite{Cominetti},\cite{Kianercy}. 
When $\gamma = 1$, \eqref{eqn:first_order_score_dyn} coincides with the learning rule studied in \cite{Coucheney} for the \textit{discount factor} $T= 1$. When $\gamma \neq1$ there is a slight difference: we use $\gamma$ not only as a discount factor (as $T$ in \cite{Coucheney}), but also as a  multiplicative factor (learning rate), and by doing so the rest points of \eqref{eqn:first_order_score_dyn}  are independent of $\gamma$. 
Structurally, EXP-D-RL is similar to online mirror descent (OMD) in convex optimization, recently studied for continuous games in  \cite{Zhou},\cite{MertikCDC2017}. In particular, the score $z^p$ is the \textit{dual} variable to the \textit{primal} variable $x^p$. Therefore, \eqref{eqn:first_order_score_dyn} describes the evolution of learning in  the dual space ${\mathbb{R}^{n^p}}$, whereas the induced strategy trajectory describes the evolution in the primal space $\Updelta^p$. This type of duality is discussed in \cite{Merti_Learning}. 
Lastly, we note that using stochastic approximation, \eqref{eqn:first_order_score_dyn} corresponds to  the individual Q-learning algorithm, which has been shown to converge in 2-player zero-sum games and 2-player partnership games (Proposition 4.2, \cite{Leslie}). 
\end{remark}
\begin{remark}Note that a first-order Euler discretization of the dynamics  \eqref{eqn:first_order_score_dyn}, with discretization step $ \alpha$,  is \vspace{-0.2cm} \begin{align*}Z^p(k+1) &= Z^p(k) +  \alpha \gamma \Big (U^p(X(k))- Z^p(k)\Big)\\ X^p(k+1) &= \sigma^p(Z^p(k+1))\end{align*}where $X^p(k) = (X^p_i(k))_{i \mathcal{A}^p}$ is the mixed-strategy 
and $X^p_i(k)$ the probability of playing $i \in \mathcal{A}^p$ at the $k$-th instance of play. This recursion  tracks \eqref{eqn:first_order_score_dyn} arbitrarily well over finite-time horizons when the discretization step $\alpha$  is sufficiently small, but requires perfect monitoring of the mixed strategies of the other players. This can be relaxed if players are assumed to possess a bounded, unbiased estimate of their actions' payoffs,  or if they  observe their in-game  realized payoffs. In fact,  \eqref{eqn:first_order_score_dyn} is  the mean field of the discrete-time stochastic process, \cite{Benaim1999},\vspace{-0.2cm} \begin{align*}Z^p(k+1) &= Z^p(k)+ \alpha(k) \gamma \Big ( \hat{u}^p(k) - Z^p(k) \Big )\\ X^p(k+1) &= \sigma^p(Z^p(k+1))\end{align*}where   $\hat{u}^p(k)= (\hat{u}^p_i(k))_{i \in \mathcal{A}^p}$ is an unbiased estimator of  $U^p(X(k))$, i.e., such that $\mathbb{E}(\hat{u}^p(k)) = U^p(X(k))$, and $\{\alpha(k)\}$ is a diminishing sequence of step-sizes, e.g. $\frac{1}{k+1}$.  If player $p$ can observe the action profile $\bm{i}^{-p}(k)$ played by his opponents (or can otherwise calculate his strategies' payoffs), such an estimate is provided by   $\hat{u}^p_i(k) =\mathcal{U}^p(i;\bm{i}^{-p}(k))$. If instead  player $p$ can only observe the payoff  $\pi^p(k):=\mathcal{U}^p(i(k);\bm{i}^{-p}(k))$ of his chosen action, then a typical choice for $\hat{u}^p_i(k)$ is $\hat{u}^p_i(k) =\pi^p(k) /X^p_i(k)$, if $i(k)=i$, \cite{Leslie,Coucheney, Merti_Learning}, where division by $X^p_i(k)$ compensates for the infrequency with which the score of the $i$-th strategy is updated. Results from the theory of stochastic approximation \cite{Benaim1999} can be used to tie convergence of such discrete-time algorithms to the asymptotic behaviour of  \eqref{eqn:first_order_score_dyn}, (see \cite{Coucheney}). In this paper we restrict our focus to the continuous-time learning scheme, as in \cite{Merti_Learning,ShammaTAC2005,Sorin2009}.   \end{remark}
\vspace{-0.2cm} 
\section{Convergence of EXP-D-RL Dynamics} 
In this section we analyze the convergence of EXP-D-RL,\! \eqref{eqn:first_order_score_dyn}.  Let  $z \! = \!(z^p)_{p \in \mathcal{N}}  \!\in \!\mathbb{R}^n$, $x \!= \!(x^p)_{p \in \mathcal{N}}$ and $U(x) \!= \! (U^p(x))_{p \in \mathcal{N}}$ denote the score vector, stacked mixed-strategies and  the overall payoff vector for all players, respectively. 
With \eqref{eqn:first_order_score_dyn},  EXP-D-RL is written  all players as\vspace{-0.2cm}
 \begin{equation} 
	\begin{cases}
	\dot z = \gamma(U(x) - z), \quad z(0) \in \mathbb{R}^n\\
	{x} = \bm{\sigma}(z),\\
	\end{cases}
	\label{eqn:first_order_exponentially_discounted_dynamics_system}
\end{equation}
where $\bm{\sigma}(z) := (\sigma^p(z^p))_{p \in \mathcal{N}}$, $\bm{\sigma}: \mathbb{R}^{n} \rightarrow \interior(\Updelta)$. 


Given that the payoff functions $U^p(x)$ are Lipschitz and bounded on $\Updelta$, the scores $z^p(t)$ will remain finite for all $t\geq0$, so $x(t) = \bm{\sigma}(z(t))$ will be defined for all $t\geq0$ and $x(t) \in \Updelta$  for all $t\geq0$. Moreover, $x^p_i(t) >0$, $t\geq0$, hence  only strategy trajectories 
on the interior of the simplex are obtained. 
  \begin{figure}[ht]
\vspace{-0.2cm}	\begin{center}
		\includegraphics[scale=0.6]{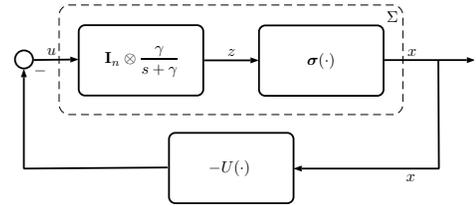}
	\end{center}
	\vspace{-0.3cm}
	\caption{EXP-D-RL  \eqref{eqn:first_order_exponentially_discounted_dynamics_system} as feedback interconnection} 
	\label{fig:Z_Exp}
	\vspace{-0.3cm}
\end{figure}
Note that with  ${u} = U(x)$, EXP-D-RL \eqref{eqn:first_order_exponentially_discounted_dynamics_system} can be represented as the  feedback interconnection in \autoref{fig:Z_Exp}, 
between $\Sigma$ 
\vspace{-0.2cm}
\begin{equation} \label{eq:sigma_u}
	\Sigma: \begin{cases}
	\dot z = \gamma(u - z)\\
	{x} = \bm{\sigma}(z),\\
	\end{cases}
\end{equation}
and ${u} = U(x)$,  where $U$ is the  game (static) map.  In the following we characterize the asymptotic behaviour of solutions of  \eqref{eqn:first_order_exponentially_discounted_dynamics_system}. 

\vspace{-0.2cm}
\subsection{Equilibrium Points of EXP-D-RL}
System \eqref{eqn:first_order_exponentially_discounted_dynamics_system} is written as,\vspace{-0.2cm}
\begin{equation}
\dot z = \gamma((U\circ\bm{\sigma})(z)  - z), \quad {x} = \bm{\sigma}(z),
\label{Exponential_Discount_Dynamic_2_Overall}
\end{equation}
where $U \circ \bm{\sigma}: \mathbb{R}^n \to \mathbb{R}^{n},  z \mapsto U(\bm{\sigma}(z))$. 
Note that $\bm{\sigma}:\mathbb{R}^n\rightarrow \Updelta$ is Lipschitz (we show in Proposition \ref{prop:prop_all_soft-max}) and since $U$ is Lipschitz and bounded, existence and uniqueness of global solutions  of \eqref{Exponential_Discount_Dynamic_2_Overall} follows from standard arguments. 
An equilibrium (rest) point $\overline{z}^\star$ 
is $\overline{z}^\star= (U\circ\bm{\sigma})(\overline{z}^\star)$, hence  a fixed point of the map $z \mapsto (U\circ\bm{\sigma})(z)$.  The existence of a fixed point is guaranteed by Brouwer's fixed-point theorem provided that $U \circ \sigma$ is a continuous function with bounded range \cite{Cominetti}. Based on \eqref{eq:sigma_u},  such a $\overline{z}^\star$ 
 can be represented as,\vspace{-0.2cm}
\begin{equation} 
\begin{cases}
\overline{u}^\star = \overline{z}^\star =U(\overline{x}^\star)\\
{\overline{x}^\star} = \bm{\sigma}(\overline{z}^\star).\\
\end{cases}
\label{eqn:fixed_point_condition_score_dyn}
\end{equation}
From \eqref{eqn:fixed_point_condition_score_dyn}, it follows that,  over the set of rest points 
of \eqref{Exponential_Discount_Dynamic_2_Overall},  the function $\overline{z}^\star \mapsto \bm{\sigma}(\overline{z}^\star)$ has an inverse given by  $\overline{x}^\star \mapsto U(\overline{x}^\star) $. 
As shown next, any $\overline{x}^\star =\bm{\sigma}(\overline{z}^\star)$ corresponding to \eqref{eqn:fixed_point_condition_score_dyn}  is a Nash equilibrium of a game with perturbed payoff (Nash distribution), and for small $\epsilon$ it approximates the Nash equilibria of $\mathcal{G}$,  (see also \cite{Govindan} and Proposition 2, \cite{Cominetti}). 
Note that as $\epsilon \rightarrow \infty$, all actions of each player $p$ are selected with uniform probability $ 1/n_p$ and there exists a unique $\overline{x}^\star$ at the centroid of the simplex $\Updelta$. 


\begin{prop}   
	\label{prop:perturbed_Nash_equilibria} Any $\overline{x}^\star =\bm{\sigma}(\overline{z}^\star)$, where $\overline{z}^\star$ is a rest point of \eqref{Exponential_Discount_Dynamic_2_Overall},  
	is a Nash equilibrium of  game $\mathcal{G}$ with perturbed payoff,\vspace{-0.2cm}
	\begin{equation}
	\label{eqn:perturbed_payoff_orig}
	\overline{\mathcal{U}}^p(x) = \mathcal{U}^p(x) - \epsilon \, {x^p}^\top   \mathbf{log}^p(x^p),
	\end{equation}
	where $\mathbf{log}^p(x^p) = \begin{bmatrix} \log(x^p_1), \ldots, \log(x^p_n) \end{bmatrix}^\top $, $\epsilon >0$.
\end{prop}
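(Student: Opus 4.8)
The plan is to show directly that $\overline{x}^\star$ satisfies the Nash inequality \eqref{eq:defNash_Equilibrium} for the perturbed game, i.e., that for each player $p$, the strategy $\overline{x}^{\star p}$ maximizes $\overline{\mathcal{U}}^p(x^p; \overline{x}^{\star -p})$ over $x^p \in \Updelta^p$. First I would fix $p$ and note that, because $\overline{\mathcal{U}}^p$ only modifies $\mathcal{U}^p$ by the term $-\epsilon\,{x^p}^\top \mathbf{log}^p(x^p)$ which depends on $x^p$ alone, the relevant optimization is
\begin{equation*}
\textstyle\max_{x^p \in \Updelta^p}\ \big\{\,{x^p}^\top U^p(\overline{x}^\star) - \epsilon\,{x^p}^\top\mathbf{log}^p(x^p)\,\big\},
\end{equation*}
using $\mathcal{U}^p(x^p;\overline{x}^{\star -p}) = {x^p}^\top U^p(\overline{x}^\star)$ from \eqref{eq_u_U}. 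Up to the sign convention on the entropy, this is exactly the regularized best-response problem \eqref{def:regularized_br_map} with regularizer $\psi^p$ as in \eqref{def:neg_entropy}, evaluated at the score $z^p = U^p(\overline{x}^\star)$.

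Next I would invoke the fixed-point characterization \eqref{eqn:fixed_point_condition_score_dyn}: a rest point satisfies $\overline{z}^\star = U(\overline{x}^\star)$ and $\overline{x}^\star = \bm{\sigma}(\overline{z}^\star)$, so in particular $\overline{x}^{\star p} = \sigma^p(\overline{z}^{\star p}) = \sigma^p(U^p(\overline{x}^\star))$. By the defining property of $\sigma^p$ in \eqref{def:regularized_br_map}, $\sigma^p(z^p)$ is the unique maximizer of $\{{x^p}^\top z^p - \psi^p(x^p)\}$ over $\Updelta^p$; applying this with $z^p = U^p(\overline{x}^\star)$ gives precisely that $\overline{x}^{\star p}$ solves the displayed maximization, hence $\overline{\mathcal{U}}^p(\overline{x}^\star) \geq \overline{\mathcal{U}}^p(x^p; \overline{x}^{\star -p})$ for all $x^p \in \Updelta^p$. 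Since $p$ was arbitrary, $\overline{x}^\star$ is a Nash equilibrium of the perturbed game. One routine point to verify is the sign bookkeeping: the perturbed payoff subtracts $\epsilon\,{x^p}^\top\mathbf{log}^p(x^p)$, and since players maximize payoff, this is consistent with subtracting the convex regularizer $\psi^p(x^p) = \epsilon\sum_j x^p_j\log x^p_j$ inside the $\argmax$, so the identification with \eqref{def:regularized_br_map} goes through cleanly.

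I do not anticipate a serious obstacle here; the statement is essentially an unpacking of the rest-point equation combined with the variational characterization of the soft-max as a regularized $\argmax$. The only mild care needed is that $\sigma^p$ maps into the relative interior $\interior(\Updelta^p)$, so the entropy term and its gradient are well-defined along the solution and at the equilibrium — this was already established in the text preceding the proposition (scores stay finite, $x^p_i(t) > 0$) — and that the equivalence between the soft-max formula \eqref{def:soft-max} and the $\argmax$ in \eqref{def:regularized_br_map} for the Gibbs-entropy regularizer is the standard one, which the paper has cited. If one wants an explicit computation rather than citing the $\argmax$ property, the KKT conditions for the strictly concave problem $\max\{{x^p}^\top z^p - \epsilon\sum_j x^p_j\log x^p_j : \mathbf{1}^\top x^p = 1,\ x^p \geq 0\}$ yield $x^p_i \propto \exp(z^p_i/\epsilon)$, matching \eqref{def:soft-max}; this is the natural place to put the one nontrivial line of the argument.
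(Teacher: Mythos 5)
Your proposal is correct and follows essentially the same route as the paper's proof: both rest on rewriting the perturbed payoff maximization as the regularized best-response problem \eqref{def:regularized_br_map} with score $z^p=U^p(\overline{x}^\star)$ (using that $U^p(x)$ is independent of $x^p$ via \eqref{eq_u_U}) and then invoking the rest-point conditions \eqref{eqn:fixed_point_condition_score_dyn} together with the $\argmax$ characterization of $\sigma^p$. The only cosmetic difference is that you verify the Nash inequality player by player, whereas the paper phrases the same facts as the identification of \eqref{eqn:fixed_point_condition_score_dyn} with a fixed point of the perturbed best-response map $\overline{BR}$.
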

\begin{proof} Recall from \eqref{def:regularized_br_map}-\eqref{def:soft-max}, 
that for all $ p \in \mathcal{N}$, 
$	\sigma^p(z^p) = \textstyle{\argmax}_{x^p \in \Updelta^p} \thinspace \{{x^p}^\top  z^p - \epsilon\textstyle{\sum}_{j \in \mathcal{A}^p} x^p_j \log(x^p_j)\} =
	\textstyle{\argmax}_{x^p \in \Updelta^p} \thinspace {x^p}^\top  \big (z^p - \epsilon \mathbf{log}^p(x^p)\big ).
$	
 On the other hand, by \eqref{eq:NE_BR}  and \eqref{eq_u_U}, 
 a Nash equilibrium of $\mathcal{G}$ with perturbed payoff \eqref{eqn:perturbed_payoff_orig}, is a fixed-point of the  (perturbed) best-response function 
$\overline{BR}=(\overline{BR}^p)_{p \in \mathcal{N}}$, where 
	$\!	\overline{BR}^p (x)
	= \textstyle{\argmax}_{x^p \in \Updelta^p} {x^p}^\top   ( U^p(x) - \!\!\epsilon   \mathbf{log}^p(x^p))\!$ 
and  $U^p(x) $ is independent of $x^p$ (by \eqref{eq_u_U}).   
Thus, $\forall p \in \mathcal{N}$,  we can write $\overline{BR}^p (x) = \sigma^p(z^p)$, where $z^p=U^p(x)$, or,  concatenating all relations, $\overline{BR} (x) = \bm{\sigma}(z)$, where $z=U(x)$. A fixed-point of $\overline{BR}$ satisfies $x=\overline{BR} (x) = \bm{\sigma}(z)$, where $z=U(x)$, which are exactly \eqref{eqn:fixed_point_condition_score_dyn}. 
\end{proof}

\begin{remark}
	We note that  $\overline{x}^\star = \bm{\sigma}(\overline{z}^\star)$ corresponding to \eqref{eqn:fixed_point_condition_score_dyn} 
	is referred to as  a \textit{Nash distribution}, \cite{Leslie}, a \textit{(perturbed) logit equilibrium} of the game \cite[p. 191]{Sandholm}, or a type of quantal response equilibrium (QRE), \cite{McKelveyPalfrey1995}. 
	It satisfies $\overline{x}^\star = \bm{\sigma}(U(\overline{x}^\star)):=\overline{BR}(\overline{x}^\star))$, hence is a fixed-point of $\bm{\sigma} \circ U$,  
	which also exists by Brouwer's fixed point theorem (Theorem 1, \cite{McKelveyPalfrey1995}) and  is parameterized by $\epsilon$. 
At a Nash distribution $\overline{x}^\star$ each player plays a smooth best-response ($\sigma^p$) to payoffs arising from the others' play. 

\end{remark}

\vspace{-0.2cm}
\subsection{Soft-max Choice Map and Passivity of $\Sigma$} 
Recall the feedback representation of EXP-D-RL \eqref{eqn:first_order_exponentially_discounted_dynamics_system}  in \autoref{fig:Z_Exp}. The following results 
give properties for $\bm{\sigma} = (\sigma^p)_{p \in \mathcal{N}}$ and $\Sigma$, \eqref{eq:sigma_u}, related to passivity. First for $\sigma^p$, \eqref{def:soft-max}, $p \in \mathcal{N}$, consider 
\vspace{-0.2cm}
\begin{equation}
\lse^p(z^p) \coloneqq  \epsilon \ln\left(\textstyle{\sum}_{j \in \mathcal{A}^p} \exp(\epsilon^{-1} z^p_j)\right), \epsilon > 0, 
\label{def:log_sum_exp}
\end{equation}
the \textit{log-sum-exp function}, ($C^2$ and convex by \cite[p. 72]{Boyd}).

Since the following properties are valid for $\sigma^p$,  \eqref{def:soft-max}, for all  $p \in \mathcal{N}$,   without loss of generality, we consider $p = 1$ (drop the superscript) and denote  $\sigma$ and $\lse$. 
Note that  $\sigma$ is not injective and $\sigma(z+c\mathbf{1}) = \sigma(z)$, for all $z\in \mathbb{R}^n$,  $c \in \mathbb{R}$, where  $\mathbf{1} = \begin{bmatrix} 1 , \ldots , 1 \end{bmatrix}^\top  \in \mathbb{R}^n$.  

\begin{prop}\label{prop:prop_all_soft-max}
The soft-max function $\sigma: \mathbb{R}^n \to \interior(\Updelta)$ satisfies the following properties:\\
(i) $\sigma$  is gradient of  $\lse: \mathbb{R}^n \to \mathbb{R}$, that is, $\nabla \lse(z) = \sigma(z)$.\\
(ii) $\sigma$ is monotone, i.e.,\vspace{-0.2cm}
$$	(\sigma(z) - \sigma(z^\prime))^\top (z - z^\prime) \geq 0, \forall z, z^\prime \in \mathbb{R}^n,
$$
(iii) $\sigma$ is $\epsilon^{-1}$-Lipschitz, that is, \vspace{-0.2cm}
$$
	\|\sigma(z) - \sigma(z^\prime)\|_2 \leq  \epsilon^{-1}\|z - z^\prime\|_2, \forall z, z^\prime \in \mathbb{R}^n,
$$
where $\epsilon> 0$ is the temperature constant.\\
(iv) $\sigma$ is  $\epsilon$-cocoercive, that is, \vspace{-0.2cm}
$$
	(\sigma(z) - \sigma(z^\prime))^\top (z-z^\prime) \geq \epsilon \|\sigma(z) - \sigma(z^\prime)\|_2^2, \forall z, z^\prime \in \mathbb{R}^n.
$$
\end{prop}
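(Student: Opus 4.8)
The plan is to establish the four properties in sequence, using each one to bootstrap the next, since they are naturally nested: (i) identifies $\sigma$ as a gradient of a convex function, which immediately gives monotonicity (ii); Lipschitzness (iii) is a Hessian bound; and cocoercivity (iv) then follows from the combination of (ii)-(iii) via the Baillon--Haddad theorem, or is proved directly.

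First I would prove (i) by direct differentiation: with $\lse(z) = \epsilon \ln\bigl(\sum_j \exp(\epsilon^{-1} z_j)\bigr)$, computing $\partial \lse/\partial z_i$ yields $\exp(\epsilon^{-1}z_i)/\sum_j \exp(\epsilon^{-1}z_j) = \sigma_i(z)$, so $\nabla\lse = \sigma$. Since $\lse$ is convex (cited from Boyd--Vandenberghe), property (ii) is then immediate from the characterization of convexity recalled in the Background section: $(\nabla f(z) - \nabla f(z'))^\top(z-z') \geq 0$. For (iii), I would compute the Hessian $\nabla^2 \lse(z) = \nabla \sigma(z)$ explicitly; it has the form $\epsilon^{-1}\bigl(\diag(\sigma(z)) - \sigma(z)\sigma(z)^\top\bigr)$. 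This matrix is positive semidefinite (it is a scaled covariance matrix of the probability vector $\sigma(z)$), and its largest eigenvalue is bounded by $\epsilon^{-1}$: indeed for any unit vector $v$, $v^\top \bigl(\diag(\sigma) - \sigma\sigma^\top\bigr) v = \sum_i \sigma_i v_i^2 - (\sum_i \sigma_i v_i)^2 = \mathrm{Var}_\sigma(v) \leq \mathbb{E}_\sigma(v^2) \leq \max_i v_i^2 \leq 1$ wait more carefully $\sum_i \sigma_i v_i^2 \le \|v\|_\infty^2 \le \|v\|_2^2 = 1$, and subtracting a nonnegative term keeps it $\le 1$. Hence $\|\nabla\sigma(z)\|_2 \le \epsilon^{-1}$, and $\epsilon^{-1}$-Lipschitzness follows by the mean value inequality along the segment from $z'$ to $z$.

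For (iv), the cleanest route is to invoke the Baillon--Haddad theorem: if $f$ is convex and $\nabla f$ is $L$-Lipschitz, then $\nabla f$ is $(1/L)$-cocoercive. Here $f = \lse$ is convex with $\nabla\lse = \sigma$ being $\epsilon^{-1}$-Lipschitz by (iii), so $\sigma$ is $\epsilon$-cocoercive. If I want to avoid citing Baillon--Haddad, I would instead argue directly: define $g(z) = \lse(z) - \tfrac{\epsilon}{2}\|z\|_2^2$; by the Hessian bound $\nabla^2 g = \nabla\sigma - \epsilon I \preceq 0$, so $g$ is concave, equivalently $-g$ is convex, which gives $(\nabla\lse(z) - \epsilon z - \nabla\lse(z') + \epsilon z')^\top(z - z') \le 0$, i.e. $(\sigma(z)-\sigma(z'))^\top(z-z') \le \epsilon\|z-z'\|_2^2$. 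Combined with convexity of $\lse$ this is the standard co-coercivity estimate; applying it together with the Cauchy--Schwarz-free argument used in the proof of Baillon--Haddad (considering the convex functions $z\mapsto \lse(z) - \langle \sigma(z_0), z\rangle$) yields $\epsilon$-cocoercivity.

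I expect the main obstacle to be (iv): properties (i)--(iii) are essentially bookkeeping, but the cocoercivity bound with the sharp constant $\epsilon$ is not a one-line consequence of monotonicity plus Lipschitzness in general — it genuinely uses that $\sigma$ is the gradient of a convex function (a non-gradient monotone $\epsilon^{-1}$-Lipschitz map need only be $\epsilon/?$-hmm, need not be cocoercive at all). So the crux is to either cite Baillon--Haddad cleanly or reproduce its short proof via the auxiliary convex functions $h_{z_0}(z) := \lse(z) - \langle\sigma(z_0),z\rangle$, whose minimizer is $z_0$, and chain the resulting inequalities. A secondary minor point to be careful about is that $\sigma$ is not injective (it is constant along the direction $\mathbf{1}$), so all these inequalities are non-strict and hold on all of $\mathbb{R}^n$; this is consistent with every statement in the proposition and requires no special handling.
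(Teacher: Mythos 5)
Your proposal is correct and follows essentially the same route as the paper: direct differentiation for (i), convexity of $\lse$ plus the gradient identity for (ii), the Hessian bound $0 \preceq \nabla^2\lse(z) \preceq \epsilon^{-1}I$ for (iii) (the paper passes from this to Lipschitzness via Nesterov's Theorem 2.1.6 rather than the mean value inequality, an immaterial difference), and the Baillon--Haddad theorem for (iv). Only your optional direct alternative for (iv) has a constant slip --- the auxiliary function should be $\lse(z)-\tfrac{\epsilon^{-1}}{2}\|z\|_2^2$, yielding $(\sigma(z)-\sigma(z'))^\top(z-z')\le \epsilon^{-1}\|z-z'\|_2^2$, not $\epsilon\|z-z'\|_2^2$ --- but your primary Baillon--Haddad argument is exactly the paper's.
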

\begin{proof} (i) Evaluating the partial derivative of $\lse$ in \eqref{def:log_sum_exp} at each component of $z$ yields $\dfrac{\partial \lse(z)}{\partial z_i} = \dfrac{\exp(\epsilon^{-1} z_i)}{\textstyle{\sum}_{j = 1}^n \exp(\epsilon^{-1} z_j)}= \sigma_i(z)$, for all $i = \{1,\dots,n\}$, hence $\nabla \lse(z) = \sigma(z)$.\\
(ii) The log-sum-exp function is  $C^2$ and convex, based on positive semi-definiteness of $\nabla^2\lse(z)$ \cite[p. 74]{Boyd}. Together with (i), this implies  that  $\sigma$ is monotone.\\
(iii) For the Hessian of $\lse$, it can be shown (\cite[p. 74]{Boyd}) that \vspace{-0.2cm}
$$	 v^\top \nabla^2\lse(z)v =  \epsilon^{-1} \Big (\textstyle{\sum}_{i = 1}^n v_i^2\sigma_i(z) - \left(\textstyle{\sum}_{i= 1}^n v_i\sigma_i(z)\right)^2 \Big ),$$
for all $z, v \!\in \! \mathbb{R}^n$, hence, $v^\top \nabla^2\lse(z)v \leq  
  \epsilon^{-1}\textstyle{\sup_z}\{\sigma_i(z)\}\sum\limits_{i = 1}^n v_i^2.$ 
Since $\sup_z\{\sigma_i(z)\} = 1, \forall i$ and $\nabla^2\lse(z)$ is positive semidefinite, this implies that \vspace{-0.2cm}
$$0\leq v^\top \nabla^2\lse(z)v \leq  \epsilon^{-1} \|v\|^2_2.$$
	By (i) and Theorem 2.1.6 in \cite{Nesterov},  $\sigma$ is $\epsilon^{-1}$-Lipschitz.\\	
(iv) 	$\epsilon$-cocoercivity of $\sigma$ follows directly from the Baillon-Haddad theorem \cite[p. 40]{Peypouquet}, since $\sigma$ is the gradient of $\lse$ by (i), and is $\epsilon^{-1}$-Lipschitz by (iii). 
 \end{proof} 
\vspace{-0.2cm}
\begin{remark}
	By  Proposition \ref{prop:prop_all_soft-max}(iv),\ $\sigma^p$ is $\epsilon$-cocoercive for all ${p \in \mathcal{N}}$, therefore, $\bm{\sigma} =(\sigma^p)_{p \in \mathcal{N}}$ is $\epsilon$-cocoercive,\vspace{-0.2cm}
	\begin{equation}\label{bm_Sigma_co}
		(\bm{\sigma}(z) - \bm{\sigma}(z^\prime))^\top (z - z^\prime) \geq \epsilon \| \bm{\sigma}(z) - \bm{\sigma}(z^\prime) \|_2^2, \forall z, z^\prime,
	\end{equation}
or, equivalently, output-strictly EIP (OSEIP). 
\end{remark}

Next, we characterize  $\Sigma$  \eqref{eq:sigma_u}. \vspace{-0.2cm}

\begin{prop}
	\label{prop:Sigma_OSEIP} 
	System $\Sigma$ \eqref{eq:sigma_u}  is output-strictly EIP (OSEIP). 	
\end{prop}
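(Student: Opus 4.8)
The plan is to exhibit an explicit storage function for $\Sigma$, built from the Bregman divergence of the log-sum-exp potential, and to verify \eqref{eq:OSEIP} by a short computation that invokes the cocoercivity of $\bm{\sigma}$ already established. First I would record the equilibrium structure of \eqref{eq:sigma_u}: since $f(z,u)=\gamma(u-z)$, for any constant input $\overline{u}\in\mathbb{R}^n$ the unique equilibrium state is $\overline{z}=\overline{u}$, so the basic assumption holds with the continuous map $k_z(\overline{u})=\overline{u}$ defined on all of $\Gamma=\mathbb{R}^n$, and the corresponding equilibrium output is $\overline{x}=\bm{\sigma}(\overline{z})$. Writing $\Phi(z):=\sum_{p\in\mathcal{N}}\lse^p(z^p)$, which is $C^2$ and convex (cf. the discussion after \eqref{def:log_sum_exp}) with $\nabla\Phi(z)=\bm{\sigma}(z)$ by Proposition~\ref{prop:prop_all_soft-max}(i) applied blockwise, I would propose the storage function
\[
V_{\overline{z}}(z):=\tfrac{1}{\gamma}\bigl(\Phi(z)-\Phi(\overline{z})-\bm{\sigma}(\overline{z})^\top(z-\overline{z})\bigr),
\]
i.e.\ $\tfrac{1}{\gamma}$ times the Bregman divergence of $\Phi$ from $\overline{z}$. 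Convexity of $\Phi$ gives $V_{\overline{z}}(z)\ge 0$ and $V_{\overline{z}}(\overline{z})=0$, so $V_{\overline{z}}$ is a valid differentiable, positive semidefinite storage function; it is only semidefinite, since $\Phi$ is flat along $\mathbf{1}$ in each player block, but Definition~\ref{def:EIP} requires only semidefiniteness.

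Next I would differentiate $V_{\overline{z}}$ along \eqref{eq:sigma_u}. Since $\nabla V_{\overline{z}}(z)=\tfrac{1}{\gamma}\bigl(\bm{\sigma}(z)-\bm{\sigma}(\overline{z})\bigr)=\tfrac{1}{\gamma}(x-\overline{x})$, substituting $\dot z=\gamma(u-z)$ gives $\dot V_{\overline{z}}(z)=(x-\overline{x})^\top(u-z)$. Using $\overline{z}=\overline{u}$ to split $u-z=(u-\overline{u})-(z-\overline{z})$,
\[
\dot V_{\overline{z}}(z)=(x-\overline{x})^\top(u-\overline{u})-\bigl(\bm{\sigma}(z)-\bm{\sigma}(\overline{z})\bigr)^\top(z-\overline{z}).
\]
Bounding the last term by the $\epsilon$-cocoercivity of $\bm{\sigma}$, inequality \eqref{bm_Sigma_co}, namely $\bigl(\bm{\sigma}(z)-\bm{\sigma}(\overline{z})\bigr)^\top(z-\overline{z})\ge\epsilon\|\bm{\sigma}(z)-\bm{\sigma}(\overline{z})\|_2^2=\epsilon\|x-\overline{x}\|_2^2$, yields
\[
\dot V_{\overline{z}}(z)\le(x-\overline{x})^\top(u-\overline{u})-\epsilon\|x-\overline{x}\|_2^2,
\]
which is precisely \eqref{eq:OSEIP} with $\beta=\epsilon>0$. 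Since $\overline{u}\in\mathbb{R}^n$ was arbitrary, $\Sigma$ is OSEIP.

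I do not expect a genuine obstacle: once the storage function is identified the argument is a two-line calculation. The one point needing care is that $\bm{\sigma}$ is not injective, so $\overline{z}$ (and hence $V_{\overline{z}}$) is pinned down only up to adding a constant vector within each player block; I would emphasize that OSEIP as formalized in Definition~\ref{def:EIP} asks only for \emph{some} positive semidefinite storage function for each admissible $\overline{u}$, and that the cocoercivity estimate is insensitive to this flat direction because $\bm{\sigma}$ (equivalently $\nabla\Phi$) annihilates it. The $1/\gamma$ scaling in $V_{\overline{z}}$ is what makes the supply rate come out in the exact normalized form of \eqref{eq:EIP}--\eqref{eq:OSEIP} rather than a $\gamma$-scaled variant.
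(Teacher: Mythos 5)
Your proof is correct and follows essentially the same route as the paper: the Bregman divergence of the (summed) log-sum-exp as storage function, differentiation along \eqref{eq:sigma_u} with $\overline{z}=\overline{u}$, and the $\epsilon$-cocoercivity of $\bm{\sigma}$ to produce the $-\epsilon\|x-\overline{x}\|_2^2$ term. The only differences are cosmetic: your $1/\gamma$ normalization makes the supply rate match \eqref{eq:OSEIP} literally (the paper keeps the harmless factor $\gamma$ in front), and you make explicit the basic-assumption check $k_z(\overline{u})=\overline{u}$ that the paper leaves implicit.
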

\begin{proof}
At an equilibrium condition of   \eqref{eq:sigma_u}, $\overline{z} = \overline{u}$ and $\overline{x} = \bm{\sigma}(\overline{z})$. 
Consider the following candidate storage  function,\vspace{-0.2cm}
	\begin{equation}\label{eq:V_storage}
	V_{\overline{z}}(z)  \!\!= \textstyle{\sum}_{p \in \mathcal{N}} \! \Big (\! \lse^p(z^p) - \lse^p({\overline{z}^p}) - \nabla\lse^p({\overline{z}^p})^\top (z^p-{\overline{z}^p}) \! \Big),
	\end{equation}
(the Bregman divergence of $\lse$, \cite{Nesterov}), with $V_{\overline{z}}(\overline{z}) =0$, $\lse^p$ as in \eqref{def:log_sum_exp}, convex. By Proposition \ref{prop:prop_all_soft-max}(i), $\nabla\lse^p = \sigma^p$. By convexity of $\lse^p$, $V_{\overline{z}}(z) \geq 0, \forall z \in \mathbb{R}^n$.  Taking the time derivative of $V_{\overline{z}}(z)$ along \eqref{eq:sigma_u} using  $\overline{z} = \overline{u}$ yields,\vspace{-0.2cm}
	\begin{align*}
	\dot V_{\overline{z}}(z) = & \nabla V_{\overline{z}}(z)^\top \dot z =  \textstyle{\sum}_{p \in \mathcal{N}} \gamma (\sigma^p(z^p) - \sigma^p({\overline{z}^p}))^\top (-z^p+u^p) \\
	= &\textstyle{\sum}_{p \in \mathcal{N}}\gamma(\sigma^p(z^p) - \sigma^p({\overline{z}^p}))^\top (-z^p + {\overline{z}^p} -{\overline{z}^p} + u^p) \\
	= & \gamma(\bm{\sigma}(z) - \bm{\sigma}({\overline{z}}))^\top [-(z - {\overline{z}}) + (u - {\overline{u}})],
	\end{align*}
	Using \eqref{bm_Sigma_co} in the foregoing yields\vspace{-0.2cm}
	\begin{equation}\label{OSEIP_Sigma}
	\dot V_{\overline{z}}(z)  \leq   - \gamma\epsilon \|\bm{\sigma}(z) - \bm{\sigma}({\overline{z}})\|_2^2 + \gamma(\bm{\sigma}(z) - \bm{\sigma}({\overline{z}}))^\top (u - {\overline{u}}),
	\end{equation}
With $x = \bm{\sigma}(z)$, $\overline{x} = \bm{\sigma}(\overline{z})$,   $\Sigma$,  \eqref{eq:sigma_u}  is OSEIP,  cf. \eqref{eq:OSEIP}.  \end{proof}
 \begin{remark}Note that $V_{\overline{z}}(z) =  \textstyle{\sum}_{p \in \mathcal{N}} D_{\lse^p}(z^p,\overline{z}^p)$, where $D_{\lse^p}(z^p,\overline{z}^p)$  is the Bregman divergence of $\lse^p$, i.e. the difference between the log-sum-exp function and its linear approximation at ${\overline{z}}^p$, \cite{Nesterov}. Using Theorem 2.1.5, \cite{Nesterov}, $\frac{\epsilon}{2} \| \bm{\sigma}(z)-\bm{\sigma}({\overline{z}})\|^2_2 \leq  V_{\overline{z}}(z) \leq \frac{\epsilon^{-1}}{2} \| z- {\overline{z}}\|^2_2$, $\forall z, \overline{z} \in \mathbb{R}^n$.\end{remark}



\vspace{-0.2cm}
\subsection{Convergence Analysis}

Next, based on the representation in  \autoref{fig:Z_Exp} (feedback interconnection  between $\Sigma$, \eqref{eq:sigma_u} and the static game map $-U$ on the feedback path),  we analyze the asymptotic behaviour of 
EXP-D-RL \eqref{eqn:first_order_exponentially_discounted_dynamics_system} 
by leveraging passivity properties of $\Sigma$.

First, solutions $z(t)$ of \eqref{eqn:first_order_exponentially_discounted_dynamics_system} remain bounded, (see also proof of Lemma 3.2, \cite{Leslie}). 
To see this, note that  since $U_i^p$ is continuous,  and $x\in \Delta$, $U_i^p(x)$ is bounded, i.e., 
there is some $M>0$  such that $|U_i^p(x) |\leq M$ for any $x \in \Delta$, for all $p \in \mathcal{N}$, $i \in \mathcal{A}^p$.  
From the integral form of EXP-D-RL, 
we can write  for all $p \in \mathcal{N}$, $i \in \mathcal{A}^p$,
$| z^p_i(t) | \leq e^{-\gamma t } |z^p_i(0)|+    \gamma \int\limits_{0}^t  e^{-\gamma (t - \tau)} |U_i^p(x(\tau))| \mathrm{d}\tau
           \leq e^{-\gamma t } |z^p_i(0)|+  M  (1-  e^{-\gamma t }) 
$
hence $| z^p_i(t) | \leq \max\{ |z^p_i(0)|,M\}$,  $\forall t\geq 0$. Also,  $\Omega = \{ z \in \mathbb{R}^n | \|z\|_2 \leq \sqrt{n} M\}$ is a compact, positively invariant set.

We make the following assumption on the payoff $U$. 

\begin{assumption}
	\label{assump:payoff_monotonicity}
(i) The negative payoff, $-U(\cdot)$, is monotone, \vspace{-0.3cm}
	\begin{equation}
	\label{eqn:U_Monotone_0}
	-(x-x^\prime)^\top (U(x) - U(x^\prime)) \geq 0, \, \, \forall x, x^\prime \in \Updelta.
	\end{equation}
(ii) The negative payoff, $-U(\cdot)$, is $\mu$-hypo-monotone, i.e., \vspace{-0.2cm}
	\begin{equation}
	\label{eqn:U_mu_Monotone}
	-(x-x^\prime)^\top (U(x) - U(x^\prime)) \geq -\mu \|x- x^\prime\|_2^2,
	\end{equation}
	for some $\mu > 0$,  for all $x, x^\prime \in \Updelta$. 
\end{assumption} 

\begin{remark}
The monotonicity in Assumption \ref{assump:payoff_monotonicity}(i) ($\mu=0$) is equivalent $-U(\cdot)$ being EIP and incrementally passive. In population games, \autoref{assump:payoff_monotonicity}(i) corresponds  to $\mathcal{G}$ being a \textit{stable game} cf. \cite{Hofbauer}, while in games with continuous actions it corresponds to games with monotone pseudo-gradient map, \cite{Facchinei}. Assumption \ref{assump:payoff_monotonicity}(ii) (as ``hypo-monotone" in \cite{Brogliato_Hypo}) corresponds to an \textit{unstable game} and is 
equivalent to shortage of monotonicity (passivity) of $-U(\cdot)$ as described by $\mu >0$. 
\end{remark}

\begin{remark} As in stable games, \cite{Sandholm}, Assumption \ref{assump:payoff_monotonicity}(i)  can be characterized via $y^\top  DU(x) y \leq 0$, for all $x \in \Updelta$, $y \in T\Updelta$,  and Assumption \ref{assump:payoff_monotonicity}(ii) via $y^\top  DU(x) y \leq \mu \|y\|_2^2$,  for all $x \in \Updelta$, $y \in T\Updelta$, where $D U(x)$ is the Jacobian of $U(x)$ and $T \Updelta = \prod_{p \in \mathcal{N}} T \Updelta^p$, $ T \Updelta^p  \coloneqq \{y^p \in \mathbb{R}^{n^p}|  \sum_{i \in \mathcal{A}^p} y^p_i = 0\}$ is the tangent  space of $\Updelta^p$. 
For $2$-player games,  $U$ is linear and \eqref{eqn:U_Monotone_0} can be checked based on the payoff matrices of the two players, $\mathbb{A}$ and $\mathbb{B}$. 
Since  \vspace{-0.2cm}\[	U(x) = \begin{bmatrix} U^1(x) \\ U^2(x) \end{bmatrix}= \begin{bmatrix} 0 & \mathbb{A} \\ \mathbb{B}^\top  & 0 \end{bmatrix} \begin{bmatrix}  x^1 \\  x^2 \end{bmatrix} :=\Phi x, \]hence \eqref{eqn:U_Monotone_0} is  $-(x-x^\prime)^\top  \Phi (x-x^\prime) \geq0$, for all $x, x^\prime \in \Updelta$, 
equivalent to $y^\top  (\Phi  +\Phi^\top ) y\leq 0$ for all $y \in  T\Updelta$, or $\Phi + \Phi^\top $ is negative semidefinite with respect to $T \Updelta$. 
This is met for  example in zero-sum games, where $\mathbb{B}= - \mathbb{A}$, hence $\Phi$ is skew-symmetric and $x^\top  \Phi x =0$, for all $x \in \Updelta$.
Another class is the class of concave potential games where the payoff vector $U(x)$ can be expressed as the gradient of a $C^1$, concave potential function $P$,  as in congestion games, \cite{Sandholm}, \cite{Cominetti}, \cite{Chasparis_and_Shamma}. In Section  VI  we consider several examples. 
\end{remark}


\begin{thm}
	\label{thm:stable_1}
Let  $\mathcal{G}$ be a finite game with players' learning schemes as given by  EXP-D-RL,  \eqref{eqn:first_order_score_dyn} or  \eqref{eqn:first_order_exponentially_discounted_dynamics_system} (\autoref{fig:Z_Exp}). Assume there are a  finite number of isolated
 fixed-points  $\overline{z}^\star$ of $U\circ \sigma$. 
Then,  under  \autoref{assump:payoff_monotonicity}(i), for any $\epsilon>0$, players' scores  $z(t)=(z^p(t))_{p \in \mathcal{N}}$ 
 converge to 
a rest point $\overline{z}^\star$, 
while players' strategies  $x(t) =(x^p(t))_{p \in \mathcal{N}}$, $x(t) =\bm{\sigma}(z(t))$  
converge to a Nash distribution $\overline{x}^\star= \bm{\sigma}(\overline{z}^\star)$ of $\mathcal{G}$. 
Under \autoref{assump:payoff_monotonicity}(ii), the same conclusions hold for  any $ \epsilon>\mu$. 
\end{thm}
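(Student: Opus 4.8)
The plan is to exploit the feedback structure of \autoref{fig:Z_Exp}: EXP-D-RL is the loop between the forward block $\Sigma$ of \eqref{eq:sigma_u}, which is OSEIP (Proposition~\ref{prop:Sigma_OSEIP}), and the static feedback map $-U$, which is incrementally passive under \autoref{assump:payoff_monotonicity}. The candidate Lyapunov function is the OSEIP storage function $V_{\overline{z}^\star}$ of \eqref{eq:V_storage} (a sum over players of Bregman divergences of the $\lse^p$), centered at a rest point $\overline{z}^\star$ of $U\circ\bm{\sigma}$ --- one exists by Brouwer, as already noted, and, writing $\overline{x}^\star=\bm{\sigma}(\overline{z}^\star)$, it satisfies $\overline{u}^\star=\overline{z}^\star=U(\overline{x}^\star)$. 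Recall $V_{\overline{z}^\star}\ge 0$, $V_{\overline{z}^\star}(\overline{z}^\star)=0$, it vanishes precisely on the affine set $\{z:\bm{\sigma}(z)=\overline{x}^\star\}$ (an artifact of $\bm{\sigma}$ being invariant to per-player shifts along $\mathbf{1}^p$), and solutions remain in a compact positively invariant set (as established above).

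The core step is to differentiate $V_{\overline{z}^\star}$ along the closed loop. Substituting $u=U(x)$, $\overline{u}^\star=U(\overline{x}^\star)$ into the OSEIP estimate \eqref{OSEIP_Sigma} gives $\dot V_{\overline{z}^\star}(z)\le\gamma(x-\overline{x}^\star)^\top\big(U(x)-U(\overline{x}^\star)\big)-\gamma\epsilon\|x-\overline{x}^\star\|_2^2$ with $x=\bm{\sigma}(z)$. Under \autoref{assump:payoff_monotonicity}(i) the cross term is $\le 0$, so $\dot V_{\overline{z}^\star}(z)\le-\gamma\epsilon\|x-\overline{x}^\star\|_2^2\le 0$ for every $\epsilon>0$; under \autoref{assump:payoff_monotonicity}(ii) it is $\le\gamma\mu\|x-\overline{x}^\star\|_2^2$, so $\dot V_{\overline{z}^\star}(z)\le-\gamma(\epsilon-\mu)\|x-\overline{x}^\star\|_2^2\le 0$ provided $\epsilon>\mu$. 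In either case $\dot V_{\overline{z}^\star}(z)=0$ forces $\bm{\sigma}(z)=\overline{x}^\star$; this uses the $\epsilon$-cocoercivity of $\bm{\sigma}$ (Proposition~\ref{prop:prop_all_soft-max}(iv)), not merely its monotonicity, which is precisely the ``passivity excess'' being balanced against the game's ``passivity shortage'' $\mu$ to yield the threshold $\epsilon>\mu$.

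I would then invoke LaSalle's invariance principle on the compact positively invariant set: every solution approaches the largest invariant set $M$ contained in $E=\{z:\dot V_{\overline{z}^\star}(z)=0\}\subseteq\{z:\bm{\sigma}(z)=\overline{x}^\star\}$. On $E$ one has $U(\bm{\sigma}(z))=U(\overline{x}^\star)=\overline{z}^\star$, so the $z$-dynamics collapse to the linear asymptotically stable flow $\dot z=\gamma(\overline{z}^\star-z)$; the only complete orbit of this flow confined to the bounded set $E$ is the equilibrium, hence $M=\{\overline{z}^\star\}$ (equivalently, every point of $M$ is a fixed point of $U\circ\bm{\sigma}$, and since these are finitely many and isolated by hypothesis while the $\omega$-limit set is connected, it collapses to a single point). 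Therefore $z(t)\to\overline{z}^\star$, and by continuity $x(t)=\bm{\sigma}(z(t))\to\overline{x}^\star=\bm{\sigma}(\overline{z}^\star)$, a Nash distribution of $\mathcal{G}$ by Proposition~\ref{prop:perturbed_Nash_equilibria}; case (ii) is verbatim with $\epsilon$ replaced by $\epsilon-\mu>0$. (One also notes as a byproduct that the entropy-perturbed game map $-U+\epsilon\,\mathbf{log}$ is strictly monotone on $\Updelta$ under either hypothesis, so the rest point is in fact unique and the isolatedness assumption is automatic.)

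\textbf{The main obstacle is the LaSalle step.} Because $\bm{\sigma}$ is not injective, $E$ is not a point but a bounded slice of the affine subspace $\{z:\bm{\sigma}(z)=\overline{x}^\star\}$, so one must genuinely use the reduced linear dynamics on that slice --- or invoke the finitely-many-isolated-fixed-points hypothesis together with connectedness of the $\omega$-limit set --- in order to upgrade ``convergence to the invariant set'' to ``convergence to a point.'' The rest (the passivity bookkeeping above and the positive invariance of $\Omega$) is routine; the one point to watch is that the strict-decrease characterization of $E$ must rely on cocoercivity (OSEIP) of $\bm{\sigma}$ rather than plain monotonicity, since that is what delivers the sharp temperature bound $\epsilon>\mu$ in part (ii).
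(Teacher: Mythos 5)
Your proposal is correct and follows essentially the same route as the paper's proof: the Bregman-divergence storage function $V_{\overline{z}^\star}$ from Proposition~\ref{prop:Sigma_OSEIP}, the OSEIP estimate combined with (hypo-)monotonicity of $-U$ to get $\dot V_{\overline{z}^\star}\le-\gamma(\epsilon-\mu)\|\bm{\sigma}(z)-\bm{\sigma}(\overline{z}^\star)\|_2^2$, and LaSalle on the compact invariant set with the reduced dynamics $\dot z=\gamma(\overline{z}^\star-z)$ on $\mathcal{E}$. Your treatment of the LaSalle step (identifying the largest invariant set as the singleton via complete orbits in the bounded slice, plus the aside that strict monotonicity of the entropy-perturbed map makes the isolated-rest-point hypothesis automatic) is, if anything, slightly sharper than the paper's wording, but it is the same argument.
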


\begin{proof}
The proof is based on the representation of  \eqref{eqn:first_order_exponentially_discounted_dynamics_system} 
 in  \autoref{fig:Z_Exp} and passivity properties of  $\Sigma$ \eqref{eq:sigma_u} in Proposition \ref{prop:Sigma_OSEIP}.

First, recall that 
$\Omega$  is a compact, positively invariant set. Let $\overline{z}^\star$ be a rest point of \eqref{Exponential_Discount_Dynamic_2_Overall}, 
     $\overline{z}^\star = \overline{u}^\star=U(\bm{\sigma}(\overline{z}^\star))$,  $ \overline{x}^\star= \bm{\sigma}(\overline{z}^\star)$, (cf. \eqref{eqn:fixed_point_condition_score_dyn}) is a Nash distribution. Consider as  Lyapunov function 
the  storage function $V$, \eqref{eq:V_storage} at $\overline{z}^\star$, i.e., \vspace{-0.2cm}
\begin{equation}\label{eqn:V_thm1}
	V_{\overline{z}^\star}\!(z) \!\!= \!\textstyle{\sum}_{p \in \mathcal{N}} \!\Big (\! \lse^p(z^p) - \lse^p({\overline{z}^p}^\star) - \nabla\lse^p({\overline{z}^p}^\star)^\top (z^p-{\overline{z}^p}^\star) \! \Big)
	\end{equation}
	where 
	$V_{\overline{z}^\star}(\overline{z}^\star) = 0$, and 
	$V_{\overline{z}^\star}(z) \geq 0, \forall z \in \mathbb{R}^n$. Moreover, using \eqref{def:log_sum_exp}, $\nabla\lse^p = \sigma^p$ and $\sum_{j \in \mathcal{A}^p} \sigma^p_j(z^p) = 1$, it can be shown that $V_{\overline{z}^\star}\!(\overline{z}^\star + \mathbf{1}c) =0$,  $\forall  c \in \mathbb{R}$, so $V_{\overline{z}^\star}(\cdot)$ is positive semidefinite, but not positive definite. 
Take the time derivative of $V_{\overline{z}^\star}\!(z)$ along solutions of \eqref{Exponential_Discount_Dynamic_2_Overall}. Then,  \eqref{OSEIP_Sigma} holds and setting  $\overline{u} = \overline{u}^\star$ and  $\overline{z} = \overline{z}^\star$, yields \vspace{-0.2cm}
	\begin{equation}\label{OSEIP_Sigma_star}
	\dot V_{\overline{z}^\star}\!(z)  \leq   - \gamma\epsilon \|\bm{\sigma}(z) - \bm{\sigma}(\overline{z}^\star)\|_2^2 + \gamma(\bm{\sigma}(z) - \bm{\sigma}(\overline{z}^\star))^\top (u - \overline{u}^\star).
\end{equation}
By \eqref{eq:sigma_u}, \eqref{eqn:fixed_point_condition_score_dyn},   $u = U (\bm{\sigma}(z))$, 
$\overline{u}^\star = U (\bm{\sigma}(\overline{z}^\star))$ and using 
\autoref{assump:payoff_monotonicity}(i) for $x=\bm{\sigma}(z)$, $\overline{x}^\star=\bm{\sigma}(\overline{z}^\star)$, it follows that the second term in \eqref{OSEIP_Sigma_star} is non-positive, hence \vspace{-0.2cm}
	\begin{align}
	\dot V_{\overline{z}^\star}\!(z)  \leq &  - \gamma\epsilon \|\bm{\sigma}(z) - \bm{\sigma}(\overline{z}^\star)\|_2^2. 
	\numberthis
	\label{eqn:thm1_ineq}
	\end{align}
Thus for any 	$ \epsilon> 0$, $\dot V_{\overline{z}^\star}\!(z) \leq 0, \forall z \in \mathbb{R}^n$, and $\dot V_{\overline{z}^\star}\!(z) = 0$, for all $z \in \mathcal{E} = \{ z \in \Omega | \bm{\sigma}(z) = \bm{\sigma}(\overline{z}^\star) \}$. We apply LaSalle's invariance principle, \cite{Khalil}, and find the largest invariant subset $\mathcal{M}$ of $\mathcal{E}$ for \eqref{Exponential_Discount_Dynamic_2_Overall}. On $\mathcal{E}$ the dynamics of \eqref{Exponential_Discount_Dynamic_2_Overall} reduce to \vspace{-0.2cm}
\begin{align*}
\dot z = \gamma \Big ( U(\bm{\sigma}(\overline{z}^\star))  - z \Big) = \gamma (\overline{z}^\star -z).
\end{align*}
Since $\gamma>0$,  $z(t) \rightarrow \overline{z}^\star$ as $t \rightarrow \infty$, for any $z(0) \in \mathcal{E}$. Thus, no other solution except   $\overline{z}^\star$ can stay forever in $\mathcal{E}$, and   $\mathcal{M}$ consists only of equilibria. 
Since by assumption \eqref{Exponential_Discount_Dynamic_2_Overall} has a finite number of isolated equilibria $\overline{z}^\star$, by LaSalle's invariance principle, \cite{Khalil}, it follows that for any $z(0) \in \Omega$, $z(t)$ converges to one of them.  
By continuity of   $\bm{\sigma}$, it follows that $x(t) = \bm{\sigma}(z(t))$ converges to $\bm{\sigma}(\overline{z}^\star)$ which is a Nash distribution. 

Alternatively, from \eqref{eqn:thm1_ineq} since $V_{\overline{z}^\star}\!(z(t))$ is non-increasing and bounded from below by $0$, it converges as $t \to \infty$. Furthermore,  $\!\int\limits_0^\top   \!\|\bm{\sigma}(z(\tau))\! - \!\bm{\sigma}(\overline{z}^\star)\|_2^2 \mathrm{d}\tau \!\leq \!\dfrac{1}{\gamma \epsilon}[V_{\overline{z}^\star}\!(z(0))\! -\! V_{\overline{z}^\star}\!(z(t))]$. Since $V_{\overline{z}^\star}\!(z(t))$ converges, by the comparison principle it follows that  $\lim\limits_{t \to \infty} \int\limits_0^\top   \|\bm{\sigma}(z(\tau)) - \bm{\sigma}(\overline{z}^\star)\|_2^2 \mathrm{d}\tau = \lim\limits_{t \to \infty} \int\limits_0^\top   \|x(\tau) - \overline{x}^\star\|_2^2 \mathrm{d}\tau$ exists (finite). Since $x(t)$ is bounded, $\dot x(t)$ is uniformly bounded in $t$ and $x(t)$ is uniformly continuous. From Barbalat's lemma \cite{Khalil}, we conclude that  $x(t)= \bm{\sigma}({z}(t)) $ converges to $\overline{x}^\star=\bm{\sigma}(\overline{z}^\star)$ as $t \to \infty$.
	
Under \autoref{assump:payoff_monotonicity}(ii),  from \eqref{OSEIP_Sigma_star},  instead of \eqref{eqn:thm1_ineq}, the following inequality is obtained \vspace{-0.2cm}
\begin{align*}
	\dot V_{\overline{z}^\star}\!(z)  \leq &  - \gamma\epsilon \|\bm{\sigma}(z) - \bm{\sigma}(\overline{z}^\star)\|_2^2 +  \gamma\mu \|\bm{\sigma}(z) - \bm{\sigma}(\overline{x}^\star)\|_2^2,
	\label{eqn:thm1_ineq2}
	\end{align*}
or, $\dot V_{\overline{z}^\star}\!(z)  \leq  - \gamma(\epsilon -\mu) \|\bm{\sigma}(z) - \bm{\sigma}(\overline{z}^\star)\|_2^2$. 
Thus, 
for any $\epsilon >\mu$, \vspace{-0.2cm}
this is as \eqref{eqn:thm1_ineq} and the proof can follow  as in the above. 
\end{proof}
\begin{remark}
%
In general, convergence is to the set of  Nash distributions.  This is the case of potential games, where multiple (logit) equilibria can exist. This is the same as 
for the smooth best-response dynamics  (cf. Theorem 3.3 \cite{HopkinsHasy}). On the other hand, any 2-player zero-sum game has a unique Nash distribution, (cf. Theorem 3.2, in \cite{HopkinsHasy}). Thus, Theorem \ref{thm:stable_1}  is similar to  results for the perturbed best-response dynamics, (cf. Theorem 3.3 and 3.2, in \cite{HopkinsHasy}), or Q-learning (Proposition 4.2 in \cite{Leslie}),  but is valid for $N$-player games.   
We note that in \cite{Coucheney},  based on analyzing the primal (induced) strategy dynamics,  Proposition 3.4 shows that   $x(t)$ converges to a  logit equilibrium of $\mathcal{G}$ in potential games, while Theorem 3.8 shows that 
perturbed \textit{strict} Nash equilibria are asymptotically stable for small enough $T$. 
Theorem \ref{thm:stable_1} analyzes  the dual (score) dynamics and shows that under \autoref{assump:payoff_monotonicity}(i) (monotonicity of  $-U$), players converge arbitrarily close to a Nash equilibrium (not necessarily \textit{strict}), by taking small $\epsilon$ (cf. Proposition \ref{prop:perturbed_Nash_equilibria}).
 Under hypo-monotonicity, \autoref{assump:payoff_monotonicity}(ii), 
convergence to a Nash distribution is 
guaranteed for  $\epsilon>\mu$, based on balancing the shortage of passivity of $-U$ by the excess passivity (OSEIP) of $\Sigma$. Our results can be also applied to monocyclic games, \cite{BenaimHofbauerHopkins2009}, which only have mixed NEs and which generalize the Rock-Paper-Scissor game (see Section VI). Proposition 3 in \cite{BenaimHofbauerHopkins2009} shows that in an unstable game ($\mathbb{A}$ is positive  definite) a completely mixed perturbed NE (or an NDE) is unstable for sufficiently small $\epsilon$ under the smooth best-response dynamics.  Based on  \autoref{assump:payoff_monotonicity}(ii), our result in Theorem \ref{thm:stable_1} characterizes the range of $\epsilon$ that guarantees convergence to a Nash distribution under EXP-D-RL dynamics in an unstable game.  Of course, too large an $\epsilon$ moves the Nash distribution away from the Nash equilibrium, so a trade-off exists. Several examples are discussed in Section VI.

\end{remark}

\begin{remark}
Theorem \ref{thm:stable_1} is related to passivity-based feedback (PBF) output regulation \cite{BIW1991}. 
 In our case,  the output-strictly EIP system $\Sigma$ has the output $x=\bm{\sigma}(z)$ regulated to $\bm{\sigma}(\overline{z}^\star)$ instead of  to the origin, and the static output feedback is   $u = U(x)$, which itself is PBF for any payoff $U$ satisfying \autoref{assump:payoff_monotonicity}(i). In fact, $\Sigma$ is $\overline{z}^\star$-detectable, and $z(t)$ converges to $\overline{z}^\star$ (since $V_{\overline{z}^\star}$  is positive semidefinite, we can show only attractivity).  We also note that a similar approach can be considered  for the un-discounted EXP-RL in \cite{Merti_Learning},\cite{Laraki}, for which $G(s) = \frac{1}{s}$ and  $\Sigma$ is only lossless EIP. Then convergence is guaranteed for games with payoff $U$ satisfying a strict version of \autoref{assump:payoff_monotonicity}(i), (strictly stable games in \cite{Sandholm}).
 \end{remark}

\vspace{-0.2cm}
\section{Passivity-Based Higher-Order EXP-D-RL}

In this section, we propose a class of higher-order extension of the EXP-D-RL scheme \eqref{eqn:first_order_score_dyn},  based on passivity arguments. 

Suppose instead that during game play, each player $p$ continuously aggregates his own strategy $x^p \in \Updelta^p$ over time into an \textit{adjustment variable} $v_a^p \in {\mathbb{R}^n}^p$, as given by $\dot{\xi}^p = A \xi^p + B x^p$, $v_a^p = C \xi^p + D x^p$.  After receiving his payoff, the player subtracts the variable from the payoff $u^p$ to form an adjusted payoff  $\tilde u^p = u^p - v_a^p$ and aggregates the adjusted payoff $\tilde u^p$ into a score $z^p$. Thus player $p$ (H-EXP-D-RL) learning scheme is given by \vspace{-0.2cm}
	\begin{equation}
	\begin{cases}
	\dot z^p &= \gamma (-z^p + U^p(x) - v^p_a), \,\, z^p(0) \in \mathbb{R}^{n^p}\\ 
	\dot \xi^p &= A\xi^p+B x^p\\
	v^p_a &= C \xi^p + D x^p\\
	x^p &= \sigma^p(z^p),
	\end{cases}
	\label{eqn:closed_loop_feedback_system_2_p}
	\end{equation}
with $\xi^p(0) =0$. Let $\tilde{u} \coloneqq u - v_a$ denote the overall adjusted payoff, where $v_a$ denotes the overall adjustment. 
Then   the overall  
players' learning dynamics  \eqref{eqn:closed_loop_feedback_system_2_p} is given as \vspace{-0.2cm}
\begin{equation}
\begin{cases}
\dot z &= \gamma({-z + u - v_a}), \,\, z \in \mathbb{R}^n, \xi(0) \in \mathbb{R}^n \\
\dot \xi &= \A\xi+\B x\\
v_a &= \C \xi + \D x\\
x &= \bm{\sigma}(z)\\
u &= U(x), 
\end{cases}
\label{eqn:closed_loop_feedback_system}
\end{equation}
where $\A$, $\B$, $\C$, $\D$ are block-diagonal matrices with $A$,$B$,$C$,$D$ on the diagonal, respectively. Note that the modified H-EXP-D-RL scheme  \eqref{eqn:closed_loop_feedback_system} can be represented as in \autoref{fig:interconnection_1},  as a feedback interconnection between $\tilde{\Sigma}$ and $U$,  a feedback modification to EXP-D-RL in \autoref{fig:Z_Exp}  where, \vspace{-0.2cm}
 \begin{equation} \mathbf{H}_a(s) \coloneqq \mathbf{C}(s\mathbf{I}_{n}- \A)^{-1}\B + \D \label{eqn:H_a_matrix}, \end{equation}
is as a payoff-adjustment learning rule.

\begin{figure}[ht]
\vspace{-0.2cm}
	\begin{center}
		\includegraphics[scale=0.6]{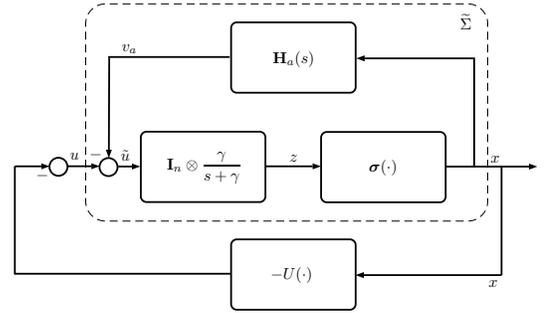}
	\end{center}
	\caption{H-EXP-D-RL  \eqref{eqn:closed_loop_feedback_system_2_p} as feedback interconnection}
	\label{fig:interconnection_1}
	\vspace{-0.3cm}
\end{figure}
In \autoref{fig:interconnection_1},  the forward path between $\tilde{u}$ and $x$ is $\Sigma$ as in  \eqref{eq:sigma_u},   which is OSEIP by Proposition \ref{prop:Sigma_OSEIP}. This means that feedback interconnection with another EIP system $ \mathbf{H}_a$ will be preserve passivity properties. This is used next. 

\begin{thm}
	\label{prop:exponentially_stable_prop}
Let  $\mathcal{G}$ be a finite game with players' higher-order learning schemes as given by H-EXP-D-RL,   \eqref{eqn:closed_loop_feedback_system_2_p}, or \eqref{eqn:closed_loop_feedback_system} represented as in \autoref{fig:interconnection_1}. 
 Assume there are a  finite number of isolated  fixed-points $\overline{z}^\star$ of $U\circ \sigma$. 
 Assume that \eqref{eqn:H_a_matrix} is proper, strictly positive real, 
 and $\mathbf{H}_a(0) = \mathbf{0}_n$. 
Then, under  \autoref{assump:payoff_monotonicity}(i), for any $\epsilon>0$, players' scores  $z(t)=(z^p(t))_{p \in \mathcal{N}}$ 
 converge to 
a  $\overline{z}^\star$, 
while players' strategies  $x(t) =(x^p(t))_{p \in \mathcal{N}}$, $x(t) =\bm{\sigma}(z(t))$  
converge to a Nash distribution $\overline{x}^\star= \bm{\sigma}(\overline{z}^\star)$ of $\mathcal{G}$. 
Under \autoref{assump:payoff_monotonicity}(ii), the same conclusions hold for  any $ \epsilon>\mu$. 
\end{thm}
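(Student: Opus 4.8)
\textbf{Proof proposal for Theorem~\ref{prop:exponentially_stable_prop}.}
The plan is to mirror the structure of the proof of Theorem~\ref{thm:stable_1}, exploiting the feedback representation in \autoref{fig:interconnection_1}: the forward path is $\Sigma$ of \eqref{eq:sigma_u}, which is OSEIP by Proposition~\ref{prop:Sigma_OSEIP}, and the feedback-modification block $\mathbf{H}_a$ is a strictly positive real LTI system with $\mathbf{H}_a(0)=\mathbf{0}_n$. First I would verify that the equilibria are unchanged: since $\mathbf{H}_a(0)=\mathbf{0}_n$, a rest point of \eqref{eqn:closed_loop_feedback_system} must have $v_a = \mathbf{H}_a(0)\,x = \mathbf{0}$, so the steady-state adjustment vanishes and the score equation reduces to $\overline z^\star = U(\bm\sigma(\overline z^\star))$, i.e.\ exactly \eqref{eqn:fixed_point_condition_score_dyn}; hence the rest points (and the Nash distribution characterization of Proposition~\ref{prop:perturbed_Nash_equilibria}) carry over verbatim. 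I also need boundedness of trajectories: $z$ stays bounded by the same integral-form argument as before once $v_a$ is known bounded, and boundedness of $\xi$ follows from strict positive realness (hence Hurwitz $\A$) of $\mathbf{H}_a$ driven by the bounded input $x\in\Updelta$. So a compact positively invariant set $\Omega$ for $(z,\xi)$ exists.

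Next I would assemble the composite Lyapunov/storage function $W(z,\xi) = V_{\overline z^\star}(z) + V_a(\xi-\overline\xi)$, where $V_{\overline z^\star}$ is the Bregman-divergence storage \eqref{eqn:V_thm1} from Proposition~\ref{prop:Sigma_OSEIP} and $V_a$ is the quadratic KYP storage function certifying that $\mathbf{H}_a$, shifted to its equilibrium $\overline\xi = k_\xi(\overline x^\star)$, is EIP (the equilibrium-independent version valid because $\A$ is invertible, cf.\ the linear-systems remark after Definition~\ref{def:EIP}). Strict positive realness gives in fact output-strict or input-strict dissipation for $\mathbf{H}_a$; at minimum it yields $\dot V_a \le (x-\overline x^\star)^\top(v_a - \overline v_a) = (x-\overline x^\star)^\top v_a$ since $\overline v_a = \mathbf{0}$. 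Differentiating $V_{\overline z^\star}$ along \eqref{eqn:closed_loop_feedback_system}, the computation in Proposition~\ref{prop:Sigma_OSEIP} now produces the extra cross term $-\gamma(\bm\sigma(z)-\bm\sigma(\overline z^\star))^\top v_a$, i.e.
\begin{equation*}
\dot V_{\overline z^\star}(z) \le -\gamma\epsilon\|x-\overline x^\star\|_2^2 + \gamma(x-\overline x^\star)^\top(u-\overline u^\star) - \gamma(x-\overline x^\star)^\top v_a.
\end{equation*}
Scaling $V_a$ by $\gamma$ and adding, the $\pm\gamma(x-\overline x^\star)^\top v_a$ terms cancel (this is the whole point of the passive feedback modification), leaving $\dot W \le -\gamma\epsilon\|x-\overline x^\star\|_2^2 + \gamma(x-\overline x^\star)^\top(u-\overline u^\star)$ (minus any strictness margin from $\mathbf{H}_a$). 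Under \autoref{assump:payoff_monotonicity}(i) the cross payoff term is $\le 0$, so $\dot W \le -\gamma\epsilon\|x-\overline x^\star\|_2^2 \le 0$; under \autoref{assump:payoff_monotonicity}(ii) it is $\le \gamma\mu\|x-\overline x^\star\|_2^2$, giving $\dot W \le -\gamma(\epsilon-\mu)\|x-\overline x^\star\|_2^2$, negative for $\epsilon>\mu$.

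Finally I would invoke LaSalle on the compact invariant set $\Omega$: $\dot W = 0$ forces $\bm\sigma(z)=\bm\sigma(\overline z^\star)$, hence $x\equiv\overline x^\star$; on that set the $\xi$-dynamics become $\dot\xi = \A\xi + \B\overline x^\star$ which, $\A$ being Hurwitz, forces $\xi\to\overline\xi$, whence $v_a\to\mathbf{0}$ and the $z$-equation reduces to $\dot z = \gamma(\overline z^\star - z)$, so $z\to\overline z^\star$. Thus the largest invariant subset consists only of rest points, and by the finite-isolated-equilibria assumption $z(t)$ converges to some $\overline z^\star$ and $x(t)=\bm\sigma(z(t))$ to the corresponding Nash distribution; alternatively one finishes with the Barbalat argument as in Theorem~\ref{thm:stable_1}. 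The main obstacle I anticipate is the bookkeeping around the equilibrium-independent KYP storage function for $\mathbf{H}_a$ and making the dissipation inequality for $\mathbf{H}_a$ line up in sign with the cross term coming out of $V_{\overline z^\star}$ — in particular checking that ``strictly positive real'' plus $\mathbf{H}_a(0)=\mathbf{0}_n$ is exactly what is needed for the basic assumption (continuity of $k_\xi$) and for the cancellation, rather than, say, needing $\A$ invertible separately; also one must confirm that only attractivity (not Lyapunov stability) of $\overline z^\star$ is claimed, since $W$ is merely positive semidefinite in $z$ along the $\mathbf{1}$-direction, exactly as in Theorem~\ref{thm:stable_1}.
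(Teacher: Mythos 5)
Your proposal is correct and follows essentially the same route as the paper's proof: equilibrium preservation via $\mathbf{H}_a(0)=\mathbf{0}_n$ and Hurwitz (hence invertible) $\A$, the composite storage $W=V_{\overline z^\star}+V_a$ with the KYP quadratic storage for the strictly positive real $\mathbf{H}_a$, cancellation of the $\pm\gamma(x-\overline x^\star)^\top v_a$ cross terms, and then the LaSalle/Barbalat argument of Theorem~\ref{thm:stable_1} with $\A$ Hurwitz pinning the invariant set to $\{(\overline z^\star,\overline\xi^\star)\}$. Your explicit boundedness remark for $(z,\xi)$ and the attractivity-only caveat are consistent with (and slightly more explicit than) the paper's treatment.
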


\begin{proof}
	At an equilibrium condition  $\!(\overline{\xi}^\star, \!\overline{z}^\star, \!\overline{x}^\star, \!\overline{u}^\star,\!\overline{v}_a^\star)\!$ of 
	\eqref{eqn:closed_loop_feedback_system}, \vspace{-0.2cm}
\begin{equation}\label{eqn:eq_II}
\begin{array}{ll}
		\mathbf{0} &= \gamma(-\overline{z}^\star + \overline{u}^\star  - \overline{v}_a^\star)\\ 
		\mathbf{0} &= \A\overline{\xi}^\star+ \B \overline{x}^\star, \qquad \overline{v}_a^\star = \mathbf{C}\overline{\xi}^\star + \D \overline{x}^\star\\
		\overline{x}^\star &= \bm{\sigma}(\overline{z}^\star), \quad \qquad  \qquad \overline{u}^\star = U(\overline{x}^\star). 	
\end{array}
\end{equation}
From the first and last line, $\overline{z}^\star = U (\bm{\sigma}(\overline{z}^\star)) - \overline{v}_a^\star$.	Since $\mathbf{H}_a$ is strictly positive real, $\A$ is Hurwitz, hence is invertible and from the second  line we obtain, $\overline{\xi}^\star = -\A^{-1}\B \overline{x}^\star$ and $\overline{v}_a^\star  = (- \mathbf{C}\A^{-1}\B + \D)\overline{x}^\star$. By assumption $\mathbf{H}_a(0) = -\mathbf{C}\A^{-1}\B + \D = \mathbf{0}_{n}$, so  that  $\overline{v}_a^\star = \mathbf{0}$, and therefore \eqref{eqn:eq_II}  reduces to \eqref{eqn:fixed_point_condition_score_dyn}. Thus, under H-EXP-D-RL, equilibria points $\overline{z}^\star$ under EXP-D-RL are preserved,  
 and  
$\overline{x}^\star$ is a Nash distribution of $\mathcal{G}$. To prove convergence of $x(t)$ to $\overline{x}^\star$ we leverage  passivity properties of the subsystems  in \autoref{fig:interconnection_1}. 
Specifically,  by \autoref{assump:payoff_monotonicity}(i), $-U$ is incrementally passive, $\Sigma$ from $\tilde u$ to $x$ is OSEIP (cf. Proposition \ref{prop:Sigma_OSEIP}) and $\mathbf{H}_a$ is EIP  passive. 

	Consider the positive semidefinite storage function of $\Sigma$, $V_{\overline{z}^\star}\!(z)$, \eqref{eqn:V_thm1}, as in  the proof of \autoref{thm:stable_1}. 
	 Taking the time derivative of $V_{\overline{z}^\star}\!(z)$ along the solutions $z(t)$ in  \eqref{eqn:closed_loop_feedback_system}, using the cocoercivity of  $\bm{\sigma}$ and following the proof of Proposition \ref{prop:Sigma_OSEIP}, it can be shown  that $\dot V_{\overline{z}^\star}\!(z) =\nabla V_{\overline{z}^\star}\!(z)^\top   \! \dot{z} $ satisfies \vspace{-0.2cm}
	\begin{align}\label{eq_V_thm2}
	\dot V_{\overline{z}^\star}\!(z) \leq& -\gamma \epsilon\|\bm{\sigma}(z) - \bm{\sigma}(\overline{z}^\star)\|_2^2 + \gamma(\bm{\sigma}(z) \!-\! \bm{\sigma}(\overline{z}^\star))^\top \!\! (u - \overline{u}^\star)\nonumber\\ 
	& - \gamma(\bm{\sigma}(z) \!-\! \bm{\sigma}(\overline{z}^\star))^\top \!\! (v_a - v_a^{*}).
	\end{align} 
Since  $\mathbf{H}_a(s)$ is strictly positive real, it is strictly passive and has a quadratic storage function $V_a(\xi) = \dfrac{\gamma}{2}(\xi-\overline{\xi}^\star)^\top  \mathbf{P}(\xi-\overline{\xi}^\star),$ where $\P$ is a   positive definite matrix  as given by the KYP Lemma, \cite{Khalil}, and the time derivative of $V_a$ along $\xi(t)$, $\dot V_a(\xi) = \nabla V_a(\xi)^\top  \dot \xi$, satisfies, \vspace{-0.2cm}
\begin{equation}\label{eq_V_a_thm2}
\dot V_a(\xi) \leq  \gamma(v_a-\overline{v}_a^\star)^\top  (x - \overline{x}^\star).
\end{equation}
	Consider the composite Lyapunov function $W(z, \xi) \coloneqq  V_{\overline{z}^\star}\!(z) + V_a(\xi)$. By construction, $W(z, \xi)$ is positive semidefinite. Taking the time derivative along the solution trajectories of \eqref{eqn:closed_loop_feedback_system}, $\dot W(z, \xi)  =  \dot V_a(\xi) + \dot V_{\overline{z}^\star}\!(z)=\nabla V_{\overline{z}^\star}\!(z)^\top   \! \dot{z} + \nabla  V_a(\xi) \dot \xi$, and 
	using \eqref{eq_V_thm2}, \eqref{eq_V_a_thm2} and $\bm{\sigma}(z) = x, \bm{\sigma}(\overline{z}^\star) = \overline{x}^\star$, yields, \vspace{-0.2cm}	\begin{align*}
	\dot W(z, \xi)  = & \dot V_a(\xi) + \dot V_{\overline{z}^\star}\!(z)\\ 
  & \hspace{-1cm}  \leq -\gamma \epsilon \|\bm{\sigma}(z) \!-\! \bm{\sigma}(\overline{z}^\star)\|_2^2\! + \!\gamma(\bm{\sigma}(z) \!- \!\bm{\sigma}(\overline{z}^\star))^\top  (u - \overline{u}^\star),
	\end{align*}
which is similar to \eqref{OSEIP_Sigma_star}. 
From this point, the proof follows using the same arguments as in the proof of \autoref{thm:stable_1}, with $\mathbf{A}$  Hurwitz being used to show that the largest invariant subset of  \eqref{eqn:closed_loop_feedback_system}  is $\mathcal{M} = \{ (\overline{z}^\star, \overline{\xi}^\star) \}$, where $\overline{\xi}^\star =-\mathbf{A}^{-1} \mathbf{B} \bm{\sigma}(\overline{z}^\star)$.  
		\end{proof}

\begin{remark}	
We note that the higher-order dynamics proposed here, designed based on passivity principles, is guaranteed to maintain the same properties of convergence in monotone (stable) games as the first-order dynamics. This is unlike the  higher-order dynamics proposed in \cite{Laraki} based on second-order ($n$-th order) integration of payoffs, which, as shown in \cite{Mabrok}, are no longer passive and for which convergence in  stable games is no longer guaranteed.

A specific second-order score dynamics from this class of RL schemes can be obtained for 
	\vspace{-0.2cm}
	\begin{equation}
	G_a(s) = \dfrac{Ks}{s+a},
	\label{eqn:pr_example_1}
	\end{equation}
	where  $K,a > 0$.  $\mathbf{H}_a(s) = \mathbf{I}_n \otimes G_a(s)$ satisfies the conditions in Theorem \ref{prop:exponentially_stable_prop} (by Lemma 6.1 in \cite{Khalil}). 
Then 
$A=-a \mathbf{I}_{n^p}$, $B=-a \mathbf{I}_{n^p}$, $C=D=K\mathbf{I}_{n^p}$,  and H-EXP-D-RL  \eqref{eqn:closed_loop_feedback_system_2_p} is  \vspace{-0.2cm}
	\begin{equation}
	\begin{cases}
	\dot z^p &= \gamma (-z^p + u^p- K(  \xi^p +  x^p)),  \,\, z^p(0) \in \mathbb{R}^{n^p}\\ 
	\dot \xi^p &= -a\xi^p-a x^p, \, \, \, \, \, \, \, \,  \xi(0)^p = \mathbf{0}\\
	x^p &= \sigma^p(z^p),
	\end{cases}
	\label{eqn:closed_loop_feedback_system_2_p_2nd}
	\end{equation}
where $u^p=U^p(x) $. Note that \eqref{eqn:pr_example_1} represents a high-pass filter. We show in Section VII that for cyclical games such as the Rock-Paper-Scissors game, such a scheme 
can in fact foster convergence. 
\end{remark}


\section{Connection to population games}


In this section we offer a population game interpretation  for the \textit{ induced} mixed-strategy dynamics of the reinforcement learning scheme. 
A  population game involves  a single (or multiple) large population(s) of agents interacting, whose members are anonymous. The agents are repeatedly randomly matched to play a symmetric two-player normal form game defined by strategy set $\mathcal{A}^p$ and a payoff matrix $\mathbb{A}$, or they could play ``the field", as in congestion games, \cite{Sandholm}. 
The identification between a large population game and continuous-time reinforcement learning is based on the fact that the dynamical equations governing the evolution of population states and those governing the evolution of mixed strategies are the same.

Consider first the induced strategy dynamics  under EXP-D-RL, obtained as the set of ODEs that describe the evolution $x(t)$ on $\Updelta$, as induced by the score dynamics  \eqref{eqn:first_order_score_dyn} on $\mathbb{R}^n$. 
Note that based on $x^p_i = \dfrac{\exp(\epsilon^{-1} z^p_i)}{\textstyle{\sum}_{k \in \mathcal{A}^p} \exp(\epsilon^{-1} z^p_k)}$ for all $i \in \mathcal{A}^p$, 
we obtain for all $i, j \in \mathcal{A}^p$
\begin{align*}
\log(x^p_i) - \log(x^p_j) & = \epsilon^{-1} (z^p_i - z^p_j) \numberthis \label{eqn:diff_ln_x_z}\end{align*} Taking the time derivative on both sides yields 
$x^p_j\dot x^p_i   = x^p_i\dot x^p_j  + x^p_ix^p_j\epsilon^{-1} (\dot z^p_i - \dot z^p_j)$. 
Summing this over all $j \in \mathcal{A}^p$ on both sides and using $\sum_{j \in \mathcal{A}^p} x_j^p = 1$ and $\sum_{j \in \mathcal{A}^p} \dot x_j^p = 0$ yields $ \dot x^p_i   = \epsilon^{-1} x^p_i(\dot z^p_i - \textstyle{\sum}_{j \in \mathcal{A}^p}x^p_j\dot z^p_j)$. Based on EXP-D-RL, substituting in $\dot z^p_i$ and $\dot z^p_j$ from \eqref{eqn:first_order_score_dyn} into the last expression, we obtain:
\begin{equation}
\dot x^p_i = {\gamma} \epsilon^{-1} [x^p_i(u^p_i - \textstyle{\sum}_{j \in \mathcal{A}^p}x^p_j u^p_j)- x^p_i(z^p_i - \textstyle{\sum}_{j \in \mathcal{A}^p}x^p_j z^p_j)]
\label{eqn:first_order_induced_dyn} 
\end{equation}
where $u^p_i =U^p_i(x)$ and $z^p_i$ is generated by EXP-D-RL. 
An alternative form is obtained by 
multiplying  \eqref{eqn:diff_ln_x_z} by $x^p_j$, summing over $j \in \mathcal{A}^p$ and using $\sum_{j \in \mathcal{A}^p} x_j^p = 1$, which  yields:
\[
\log(x^p_i) - \textstyle{\sum}_{j \in \mathcal{A}^p}x^p_j\log(x^p_j) = \epsilon^{-1} (z^p_i - \textstyle{\sum}_{j \in \mathcal{A}^p}x^p_jz^p_j)
\]
Substituting the above expression into \eqref{eqn:first_order_induced_dyn}, we obtain
\begin{equation}
\dot x^p_i =  {\gamma} \epsilon^{-1} x^p_i(u^p_i - \textstyle{\sum}_{j \in \mathcal{A}^p}x^p_ju^p_j)+ {\gamma}x^p_i(\textstyle{\sum}_{j \in \mathcal{A}^p}x_j^p\log(\frac{x^p_j}{x^p_i}))
\label{eqn:selection-mutation-entropy}
\end{equation}

Next, we study connections between \eqref{eqn:first_order_induced_dyn} or \eqref{eqn:selection-mutation-entropy} and existing population dynamics.  Consider  a single (or multiple) population(s) of agents. Each agent in population $p$ is  preprogrammed to play a certain pure strategy $i \in \mathcal{A}^p$.   In this case, $x^p_i$ is identified as  the fraction of population $p$ using strategy $i \in \mathcal{A}^p$.  Furthermore,  $x^p$ is a population state (corresponding to the mixed strategy of player $p$), and  $x$ refers to the overall population state. In population games generated by (random) matching in normal form games  we can identify each population $p$ with a player.   We note that the single-player reinforcement learning (typically referred to as a ``play against nature") is equivalent to a single-population matching.  The evolution of population states in a population game can be modelled using a switching rule called the \textit{revision protocol}, \cite{Sandholm}. Formally, a revision protocol is a map $\rho^p: \Updelta^p \times \mathbb{R}^{n^p} \to \mathbb{R}^{{n}^p \times {n}^p}_{\geq 0}$ where $\rho^p_{ij} \in \mathbb{R}_{\geq 0}$ is referred to as the \textit{conditional switch rate},  proportional to the probability of agents in population $p$ playing pure strategy $i$ that switch to using strategy $j$. The flow of population share of agents under a particular revision protocol is given by,\cite{Sandholm},
\begin{equation}
\dot x^p_{i} = \sum\limits_{j \in \mathcal{A}^p} x^p_{j} \rho^p_{ji} - x^p_{i} \sum\limits_{j \in \mathcal{A}^p} \rho^p_{ij},
\label{eqn:mean_dynamics} \tag{MD}
\end{equation} 
which is referred as the \textit{mean dynamics}. The first term of \eqref{eqn:mean_dynamics} represents the rate of inflow of agents adopting strategy $i$ and the second term represents the rate of outflow of agents abandoning it. We note that when $\rho^p_{ij} = x^p_{j}(u^p_j - Z)$, where  $Z \in \mathbb{R}$ acts as a threshold constant that is less than any payoff $u^p_j$, $\rho^p_{ij}$  is the so called ``imitation of success" revision protocol \cite{Sandholm}, which generates the \emph{replicator dynamics} as (MD). It is well known that the replicator dynamics converges in \emph{strictly} stable games, but cycles  in (null) stable games.  
Next, we show that the revision protocol\footnote{Strictly speaking, the proposed revision protocol is well defined only when $z_j^p$ is less than $u_j^p$ and $u_j^p \geq 0$ for all $j \in \mathcal{A}^p$. The proposed revision protocol here only serves to provide a behavioural intuition to the induced dynamics.}:
\begin{equation}
\rho^p_{ij} = {\gamma}\epsilon^{-1} x^p_{j}(u^p_j -z^p_j)
\label{def:imitation_of_discounted_success}
\end{equation}
where $z^p_j$ is updated as in \eqref{def:exponential_discount_learning}, generates \eqref{eqn:first_order_induced_dyn}. Indeed, using \eqref{eqn:mean_dynamics}, \eqref{def:imitation_of_discounted_success} and $\sum_{j \in \mathcal{A}^p} x_j^p = 1$ yields:
\begin{equation*}
\begin{split}
\dot x^p_{i} &= {\gamma}\epsilon^{-1}(\textstyle{\sum}_{j \in \mathcal{A}^p} x^p_{j} x^p_{i}(u^p_i -z^p_i) - x^p_{i}\textstyle{\sum}_{j \in \mathcal{A}^p} x^p_{j}(u^p_j -z^p_j))\\
& = {\gamma}\epsilon^{-1} (x^p_i(u^p_i - \textstyle{\sum}_{j \in \mathcal{A}^p}x^p_j u^p_j)- x^p_i(z^p_i - \textstyle{\sum}_{j \in \mathcal{A}^p}x^p_j z^p_j))
\end{split}
\end{equation*} which is \eqref{eqn:first_order_induced_dyn}. In light of the similarity of   \eqref{def:imitation_of_discounted_success} with the ``imitation of success" revision protocol,  we refer to \eqref{def:imitation_of_discounted_success} as ``imitation of exponentially-discounted success". 
 We note that \eqref{eqn:first_order_induced_dyn} takes on the form of (scaled) replicator dynamics \cite{Sandholm} with an extra \emph{replicator-like correction term } that depends on the \textit{excess score}, which is precisely the ``penalty-adjusted replicator equation" with discount rate $T= 1$ \cite{Coucheney}. Furthermore, \eqref{eqn:selection-mutation-entropy} is simply the (scaled) replicator dynamics under the influence of a relative-entropy term that is independent of the game payoff rate. We note that if the score is  a direct integration of the payoff, i.e.,  $\dot z^p_j = u^p_j$,  then we recover the replicator dynamics obtained by \cite{Laraki} under exponential learning. It is the  replicator-like correction term in  \eqref{eqn:first_order_induced_dyn} (induced by discounted integration) that enables convergence in merely stable games, not necessarily strict. This beneficial property is due by the more sophisticated imitation protocol  \eqref{def:imitation_of_discounted_success} versus pure imitation. An interpretation of \eqref{def:imitation_of_discounted_success} can be given as follows. When agents playing $i$ see that agent playing $j$ has nonzero payoff, they switch to strategy $j$ with a rate proportional to the difference between this payoff and the cumulative discounted payoff over time,  multiplied by the popularity strategy $j$. This means an agent is more likely to switch to strategy $j$ if the payoff $u^p_j$ keeps on getting better over time. 
 %
 We also note that similar to this, one can show that the induced dynamics of the higher-order reinforcement learning scheme (H-EXP-D-RL)  is generated  via the revision protocol, \vspace{-0.2cm}\begin{equation}\rho^p_{ij} = {\gamma} \epsilon^{-1} x^p_{j}(u^p_j - {v_a^p}_j-z^p_j)\end{equation}where $v^p_{ai}= K\xi^p_i  + K x^p_i$. 



\vspace{-0.2cm}
\section{Examples and Numerical Simulations}

In this section, we discuss several examples: single-population game ($p=1$) (Example 1 and 2), two-player games (Example 3, 4 and 5) and three-player games (Example 6 and 7). In addition some difficult games  are discussed: modified RPS game, \cite{BenaimHofbauerHopkins2009}) (Example 8), and a modified  asymmetric Jordan game, \cite{ShammaTAC2005}, (Example 9). In all cases we compare the performance of \eqref{eqn:first_order_score_dyn} versus that of  \eqref{eqn:closed_loop_feedback_system_2_p_2nd} in games characterized by their monotonicity property as given by   \autoref{assump:payoff_monotonicity}(i) or (ii). We take 
 $K = 1, a= 1$ and   
 an arbitrary initial condition $z(0)$. 
 
 In the first two examples we consider single-player reinforcement learning,  \cite[p.26]{Sutton},  typically referred to as a ``play against nature". The canonical example  is equivalent to a single-population matching,  \cite{Sandholm},  ($p\!=\!1$), where the identities of the agents can be interchanged. Under this identification,  consider a large population of agents, each randomly matched with an opponent to play a symmetric two-player game with $U(x) = \mathbb{A} x$. Then \eqref{eqn:U_Monotone_0}, $-(x-x^\prime)^\top  \mathbb{A} (x-x^\prime) \geq0$, for all $x, x^\prime \in \Updelta$, is  equivalent to 
$y^\top  \mathbb{A} y = \frac{1}{2}y^\top  (\mathbb{A} +\mathbb{A} ^\top ) y \leq 0$, for all $y \in T\Updelta$, and \autoref{assump:payoff_monotonicity} can be checked  via the eigenvalues of $\mathbb{A} +\mathbb{A} ^\top $. Thus, \autoref{assump:payoff_monotonicity}(i) holds if $\mathbb{A} +\mathbb{A} ^\top $ is negative semi-definite with respect to the tangent space $T\Delta$. Moreover, $  \frac{1}{2}y^\top  (\mathbb{A} +\mathbb{A} ^\top ) y \leq \frac{1}{2} \lambda_{\max}( \mathbb{A} +\mathbb{A} ^\top ) \|y\|^2$, for all $y \in T\Updelta$, where $\lambda_{\max} (\mathbb{A} +\mathbb{A} ^\top)$ is the maximum eigenvalue corresponding to an eigenvector in $T \Updelta$. Thus, \autoref{assump:payoff_monotonicity}(ii) holds with $\mu = \frac{1}{2} \lambda_{\max}( \mathbb{A} +\mathbb{A} ^\top ) $.

\begin{example} Single-population Rock-Paper-Scissors Game
\label{single_rps_game}

Consider a large population of agents, each randomly matched with an opponent to play the Rock-Paper-Scissors (RPS) game characterized by the  payoff matrix, \cite{Sandholm},\vspace{-0.1cm}
	\begin{equation}
	\label{eqn:RPS_game}
	\mathbb{A} = \begin{bmatrix*}[r] 0 & -l & 1 \\ 1 & 0 & -l \\ -l & 1 & 0 \end{bmatrix*}, 
	\end{equation}
where $l$ is  a parameter.
  For $l=1$ this known as a standard RPS game  (null stable, cf. \cite[p. 79]{Sandholm}). For $l<1$ the game is strictly stable,  while for $l >1$ it unstable (or bad RPS, cf. \cite{Sandholm}). This game has a unique, interior Nash equilibrium $x^\star$ which  coincides with the Nash distribution (logit equilibrium) $\overline{x}^\star = \begin{bmatrix}\frac{1}{3} & \frac{1}{3} & \frac{1}{3} \end{bmatrix}^\top $, and for which  $\overline{z}^\star = \frac{1-l}{3} \mathbf{1}$.  
	 The eigenvalues of $\mathbb{A} +\mathbb{A} ^\top $ are $\{2(1-l), l-1, l-1\}$ with the last two corresponding to eigenvectors in $T \Updelta$. Thus for $l\leq 1$, $\mathbb{A} +\mathbb{A} ^\top $  is  negative semi-definite with respect to the tangent space $T\Delta$ and satisfies \autoref{assump:payoff_monotonicity}(i).  	
Note that for all $x \in \Updelta$, 
$
	(x-x^\prime)^\top  \mathbb{A} (x-x^\prime)  = \dfrac{1}{2} (x-x^\prime)^\top  (\mathbb{A}+\mathbb{A}^\top  )(x-x^\prime)= \dfrac{l - 1}{2}\|x - x^\prime\|_2^2
$. 
Thus, \autoref{assump:payoff_monotonicity}(i) is satisfied for $l=1$ and $l \leq 1$ ((null) stable game) , while for  $l>1$ (unstable game),  \autoref{assump:payoff_monotonicity}(ii) holds with  $\mu = \frac{l-1}{2}$.   By \autoref{thm:stable_1} and \autoref{prop:exponentially_stable_prop}, if $l\leq 1$, \eqref{eqn:first_order_score_dyn} and \eqref{eqn:closed_loop_feedback_system_2_p_2nd} are guaranteed to converge for any $\epsilon >0$, while if $l>1$, convergence is guaranteed  for any  $\epsilon >\frac{l-1}{2} $. 
We note that this lower bound is not tight, and in fact  in simulations we see that convergence occurs even when it is violated. 
	We show simulation results for the Rock-Paper-Scissors game with $l = 1,2.5,5$ and $8$ in \autoref{fig:convergence_plot_rps_1}, \autoref{fig:convergence_plot_rps_2_5}, \autoref{fig:convergence_plot_rps_5} and \autoref{fig:convergence_plot_rps_8}, respectively, all for $\epsilon=1$. Each figure shows the score trajectories and  the induced strategy trajectories in single-population, where the blue line corresponds to trajectory of  \eqref{eqn:first_order_score_dyn}, while the red line corresponds to trajectory of \eqref{eqn:closed_loop_feedback_system_2_p_2nd}. 
	For $l = 1, 2.5, 5$ the score trajectories $z(t)$ of both \eqref{eqn:first_order_score_dyn} (blue) and \eqref{eqn:closed_loop_feedback_system_2_p_2nd} (red)  converge to the corresponding  $\overline{z}^\star$, $\mathbf{0}, -\frac{1}{2}\mathbf{1}, -\frac{4}{3}\mathbf{1}$, respectively, while the induced mixed-strategy $x(t)$ trajectories converge to $\overline{x}^\star$, with the second-order converging faster than the first-order dynamics. 
	For $l = 8$,  \eqref{eqn:closed_loop_feedback_system_2_p_2nd} converges, whereas \eqref{eqn:first_order_score_dyn} forms a limit cycle in the interior of the simplex. Computing the Jacobian matrix of the dynamics \eqref{eqn:first_order_score_dyn} at $\overline{z}^\star$ reveals that the eigenvalues are at $\{-1, \frac{l-1-6\epsilon}{6\epsilon} \pm \frac{ \sqrt{3} (1+l)}{6 \epsilon} i\} $, so a bifurcation occurs at $\epsilon=\frac{l-1}{6}$, which for $l=8$ is $\frac{7}{6}$. For  the Jacobian matrix of the dynamics \eqref{eqn:closed_loop_feedback_system_2_p_2nd}, a value of $\epsilon$ close to where the bifurcation occurs for $l=8$ is $0.86$ (computed numerically). 
	This explains why,  in the RPS game with $l = 8$,   for $\epsilon =1$, \eqref{eqn:closed_loop_feedback_system_2_p_2nd} converges, whereas \eqref{eqn:first_order_score_dyn} does not.  
	This example shows that the higher-order dynamics  \eqref{eqn:closed_loop_feedback_system_2_p_2nd} can potentially solve larger classes of games (``more unstable"). 
%
\vspace{-0.3cm}
	\begin{figure}[h]
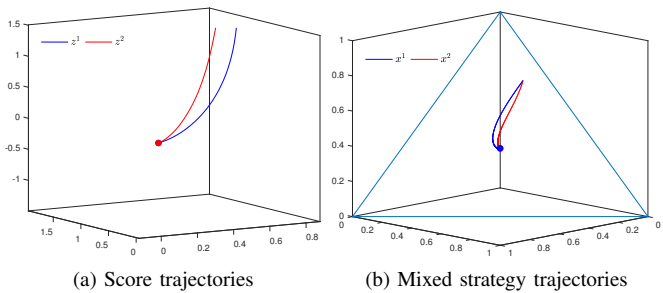

\vspace{-0.2cm}
		\hfill
		\subfloat[Score trajectories]{\includegraphics[width=0.48\linewidth]{rps_single_z_1}}
		\hfill
		\subfloat[Mixed strategy trajectories]{\includegraphics[width=0.5\linewidth]{rps_single_x_1}}
		\hfill
		\caption{Standard RPS game, $l = 1$, $\epsilon=1$}
		\label{fig:convergence_plot_rps_1}
\vspace{-0.3cm}
	\end{figure}
\vspace{-0.3cm}
\begin{figure}[h]
\vspace{-0.3cm}
		\hfill
		\subfloat[Score trajectories]{\includegraphics[width=0.48\linewidth]{rps_single_z_2_5}}
		\hfill
		\subfloat[Mixed strategy trajectories]{\includegraphics[width=0.5\linewidth]{rps_single_x_2_5}}
		\hfill
		\caption{Unstable RPS game, $l = 2.5$, $\epsilon=1$}
		\label{fig:convergence_plot_rps_2_5}
\vspace{-0.3cm}
	\end{figure}
\vspace{-0.3cm}
		\begin{figure}[h]
\vspace{-0.3cm}
	\hfill
	\subfloat[Score trajectories]{\includegraphics[width=0.48\linewidth]{rps_single_z_5}}
	\hfill
	\subfloat[Mixed strategy trajectories]{\includegraphics[width=0.5\linewidth]{rps_single_x_5}}
	\hfill
	\caption{Unstable RPS game, $l = 5$, $\epsilon=1$}
	\label{fig:convergence_plot_rps_5}
\vspace{-0.3cm}
\end{figure}
\vspace{-0.3cm}
		\begin{figure}[h]
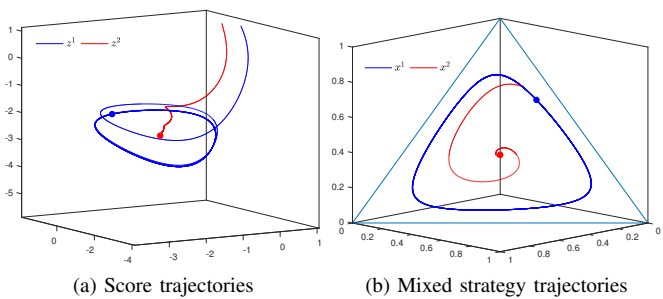

\vspace{-0.3cm}
		\hfill
		\subfloat[Score trajectories]{\includegraphics[width=0.48\linewidth]{rps_single_z_8}}
		\hfill
		\subfloat[Mixed strategy trajectories]{\includegraphics[width=0.5\linewidth]{rps_single_x_8}}
		\hfill
		\caption{Unstable RPS game, $l = 8$, $\epsilon=1$}
		\label{fig:convergence_plot_rps_8}
\vspace{-0.3cm}
	\end{figure}
\end{example}

\begin{example} $123$-Anti-Coordination Game 	


	Consider a population game in the class of concave potential games where the payoff vector $U(x)$ can be expressed as the gradient of a $C^1$, concave potential function, as in \cite{Sandholm}, \cite{Cominetti}. It is well-known that this class of potential games satisfies the monotonicity property in \autoref{assump:payoff_monotonicity}(i). Therefore  by \autoref{thm:stable_1} and \autoref{prop:exponentially_stable_prop}, the induced mixed strategy trajectories \eqref{eqn:first_order_score_dyn} and \eqref{eqn:closed_loop_feedback_system_2_p_2nd} are guaranteed to converge. 	Consider a well-known case  of the 123 anti-coordination game \cite{Sandholm}, with\vspace{-0.2cm} \begin{equation} \label{eqn:ant-123}	\mathbb{A} = \begin{bmatrix*}[r] -1 & 0 & 0 \\ 0 & -2 & 0 \\ 0 & 0 & -3 \end{bmatrix*}. 
\end{equation}The unique interior Nash equilibrium is located at $x^\star = [\frac{6}{11}; \frac{3}{11}; \frac{2}{11}] $. The Nash distribution (logit equilibrium) corresponding to $\epsilon = 1$ is $\overline{x}^\star= [ 0.40; 0.32;0.27] $ and it does not coincide with the Nash equilibrium. The simulation results are shown in \autoref{fig:anti-coord}(a), with the Nash equilibrium $x^\star $ shown as $\star$ in red. The induced mixed strategy trajectories of both \eqref{eqn:first_order_score_dyn} (shown in blue) and \eqref{eqn:closed_loop_feedback_system_2_p_2nd} (in red) converge to $\overline{x}^\star$. For small $\epsilon$,  $\overline{x}^\star$ gets arbitrarily close to $x^\star$ (e.g., \autoref{fig:anti-coord}(b) for $\epsilon = 0.1$).
		
\vspace{-0.3cm}
			\begin{figure}[h]
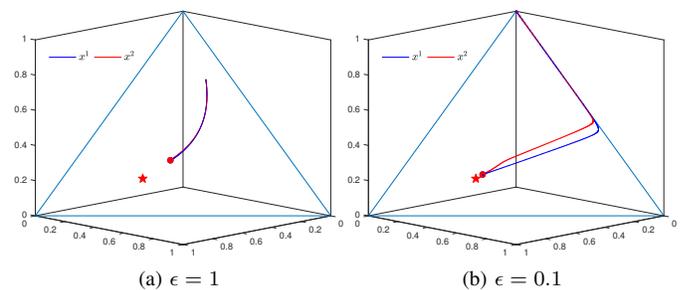

\vspace{-0.3cm}
		\hfill
		\subfloat[$\epsilon = 1$]{\includegraphics[width=0.5\linewidth]{anticoord_123_epsilon_1}}
		\hfill
		\subfloat[$\epsilon = 0.1$]{\includegraphics[width=0.5\linewidth]{anticoord_123_epsilon_1_10}}
		\hfill
		\caption{123 Anti-Coordination Game}
		\label{fig:anti-coord}
\vspace{-0.3cm}
	\end{figure}

\end{example} \vspace{-0.2cm}

In the following three examples we consider two-player (asymmetric) games. We use the convention in \cite{Leslie},  \cite{Hofbauer}, where player 1 is the ``row" player with payoff matrix  $\mathbb{A}$ and player 2 is the ``column" player with payoff matrix  $\mathbb{B}$, so that  $\mathcal{U}^1(x) = (x^1)^\top  \mathbb{A} x^2$, $\mathcal{U}^2(x) = (x^1)^\top  \mathbb{B} x^2$. Then, $\mathcal{U}^1(x) = (x^1)^\top U^1(x)$, $\mathcal{U}^2(x) = (x^2)^\top  U^2(x)$  where $U^1(x) = \mathbb{A} x^2$, $U^2(x) = \mathbb{B}^\top  x^1$. With $x=(x^1,x^2)$,  
in this case,  \vspace{-0.2cm}\[	U(x) = \begin{bmatrix} U^1(x) \\ U^2(x) \end{bmatrix}= \begin{bmatrix} 0 & \mathbb{A} \\ \mathbb{B}^\top  & 0 \end{bmatrix} \begin{bmatrix}  x^1 \\  x^2 \end{bmatrix} :=\Phi x, \]
 Since  $U$ is linear, \eqref{eqn:U_Monotone_0} can be checked based on $\Phi$, hence on the payoff matrices of the two players, $\mathbb{A}$ and $\mathbb{B}$. Note that  \eqref{eqn:U_Monotone_0} is equivalent to $-(x-x^\prime)^\top  \Phi (x-x^\prime) \geq0$, for all $x, x^\prime \in \Updelta$, 
 hence if and only if $y^\top  (\Phi  +\Phi^\top ) y\leq 0$ for all $y \in  T\Updelta$. Thus \autoref{assump:payoff_monotonicity}(i) holds if $\Phi + \Phi^\top $ is negative semidefinite with respect to $T \Updelta$. Similarly,  \autoref{assump:payoff_monotonicity}(ii) holds for $\mu = \frac{1}{2} \lambda_{\max} (\Phi + \Phi^\top )$, where $\lambda_{\max} $ is maximum eigenvalue with respect to the tangent space $T \Updelta$. 
  \autoref{assump:payoff_monotonicity}(i) is met for  example in zero-sum games, where $\mathbb{B}= - \mathbb{A}$, since $\Phi = -\Phi^\top $ (skew-symmetric) and $x^\top  \Phi x =0$, for all $x \in \Updelta$. 

Note that $\Phi + \Phi^\top  =  \begin{bmatrix} 0 & \mathbb{A} + \mathbb{B} \\  \mathbb{A}^\top  +  \mathbb{B}^\top  & 0 \end{bmatrix}$, so that  the eigenvalues of $\Phi + \Phi^\top $ are the square roots of the eigenvalues of $ (\mathbb{A} + \mathbb{B}) (\mathbb{A} +  \mathbb{B})^\top $ (cf.  Lemma 4.4 in \cite{HopkinsHasy}). This means that $\Phi + \Phi^\top $ has always positive and negative eigenvalues, hence  all two-player games that satisfy 
\autoref{assump:payoff_monotonicity}(i)  are null stable (null monotone).  


\begin{example} Two-player Matching Pennies (MP) Game
	\label{mp}

 In this game, player $1$ and $2$ flip a coin and reveal them simultaneously. If the coins of the two players both land on head (or tail), then player $1$ earns a payoff $+1$ from player $2$, otherwise, player $2$ earns a payoff  $+1$ from player $1$. The payoff matrices are given by,	
	\begin{equation}
	\label{eqn:mp_game}
	\mathbb{A} = \begin{bmatrix*}[r] +1 & -1 \\ -1 & +1 \end{bmatrix*}, \quad  
	\mathbb{B} = -\mathbb{A}.
	\end{equation} so this is a zero-sum game. 
	The unique interior Nash equilibrium is given by $x^\star  = \begin{bmatrix} \frac{1}{2}  & \frac{1}{2} \end{bmatrix}^\top $, and coincides with the logit equilibrium $\overline{x}^\star$.  
The payoff  vector  is given by,
	\begin{equation} 
		U(x) = \begin{bmatrix} U^1(x) \\ U^2(x) \end{bmatrix}= \begin{bmatrix} 0 & \mathbb{A} \\ \mathbb{B}^\top  & 0 \end{bmatrix} \begin{bmatrix}  x^1 \\  x^2 \end{bmatrix} = \begin{bmatrix} \mathbb{A} x^2 \\ -\mathbb{A}^\top  x^1 \end{bmatrix}: =\Phi x.
	\end{equation}
Since $\Phi+\Phi^T\!=\!0$, 	\autoref{assump:payoff_monotonicity}(i) is satisfied. By \autoref{thm:stable_1} and \autoref{prop:exponentially_stable_prop}, 
for $l=1$, both first, \eqref{eqn:first_order_score_dyn},  and higher-order, \eqref{eqn:closed_loop_feedback_system_2_p_2nd} learning dynamics, are guaranteed to converge for any $\epsilon >0$. 
Simulations results are shown in  \autoref{mp_1_1} and \autoref{mp_1_2} for $\epsilon=1$ and $\gamma=1$,  and $\gamma=4$, respectively. 
The dotted line represents the induced strategy trajectories of \eqref{eqn:first_order_score_dyn}, whereas the solid line represents the induced strategy trajectories of \eqref{eqn:closed_loop_feedback_system_2_p_2nd}.  It can be seen from the simulation that all the mixed strategy trajectories converge to the Nash equilibrium and that a higher learning rate $\gamma$ increases the convergence speed.  

\vspace{-0.3cm}
\begin{figure}[h]
\vspace{-0.3cm}
		\centering
	\includegraphics[width= 0.8\linewidth]{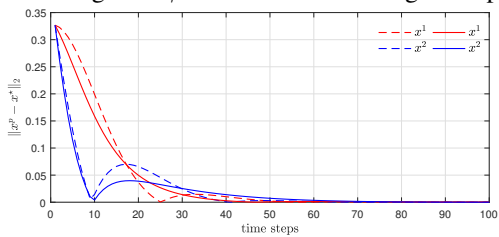} 
	\caption{Two-player Matching Pennies Game, $\gamma=1$}
	\label{mp_1_1}
\end{figure}
\vspace{-0.4cm}
\begin{figure}[h]
\vspace{-0.3cm}
		\centering
	\includegraphics[width= 0.8\linewidth]{two_player_mp_gamma_4}    
	\caption{Two-player Matching Pennies Game, $\gamma=4$}
	\label{mp_1_2}
\end{figure}

\end{example}

\begin{example} Two-player Rock-Paper-Scissors  Game \label{two_rps_game}
	
	Consider the two-player RPS game with payoff matrices 
$\mathbb{A}$  and $\mathbb{B}$, where $\mathbb{A}$ is as in \eqref{eqn:RPS_game} and $\mathbb{B}= \mathbb{A}^\top $.
 The unique Nash distribution (same as the Nash equilibrium $x^\star$) is $\overline{x}^\star \!=\!({\overline{x}^\star}^1,{\overline{x}^\star}^2)$  where  ${\overline{x}^\star}^1 \!=\!{\overline{x}^\star}^2 \! =\! (\frac{1}{3},  \frac{1}{3}, \frac{1}{3} ) $. 
 The payoff vector is\vspace{-0.1cm}
	\[	U(x) = \begin{bmatrix} U^1(x) \\ U^2(x) \end{bmatrix}= \begin{bmatrix} 0 & \mathbb{A} \\ \mathbb{B}^\top  & 0 \end{bmatrix} \begin{bmatrix}  x^1 \\  x^2 \end{bmatrix} = \begin{bmatrix} \mathbb{A} x^2 \\ \mathbb{A} x^1 \end{bmatrix}: =\Phi x,\] 
		where $\mathbb{A}$ is as in \eqref{eqn:RPS_game}. 
		The eigenvalues of $\mathbb{A}+ \mathbb{B} = \mathbb{A + \mathbb{A}^\top  } $ are  $\{ 2(l-1), 1-l, 1-l\}$ (see Example \ref{single_rps_game}). Then, the eigenvalues of $\Phi + \Phi^\top $ are   $\{ \pm 2(l-1),\pm(1-l), \pm (1-l) \}$.   For $l=1$, \autoref{assump:payoff_monotonicity}(i) holds; this is the zero-sum game considered in \cite{Leslie}. 		On the other hand, for $l\neq 1$ the game 
 satisfies  \autoref{assump:payoff_monotonicity}(ii) for $\mu = \frac{1}{2} |l-1|$. Thus, in this case even for $l <1 $ the game is unstable. By \autoref{thm:stable_1} and \autoref{prop:exponentially_stable_prop}, 
for $l=1$, both first, \eqref{eqn:first_order_score_dyn},  and higher-order, \eqref{eqn:closed_loop_feedback_system_2_p_2nd} learning dynamics, are guaranteed to converge for any $\epsilon >0$, while for $l \neq 1$, they are guaranteed to globally converge   for $\epsilon >  \frac{1}{2} |l-1|$. As in Example 	 \ref{single_rps_game}, the bound on $\epsilon$ is not tight. \autoref{fig:two_player_RPS_1_and_5} shows  the induced strategy trajectories of the two players under  \eqref{eqn:first_order_score_dyn} (dotted line) and  \eqref{eqn:closed_loop_feedback_system_2_p_2nd} (solid line), for $\epsilon =1$,  
	in the standard RPS game ($l = 1$) and unstable RPS game ($l=5$), respectively,  indicating convergence to $x^\star=\overline{x}^\star$. 

%
\vspace{-0.3cm}	
			\begin{figure}[h]
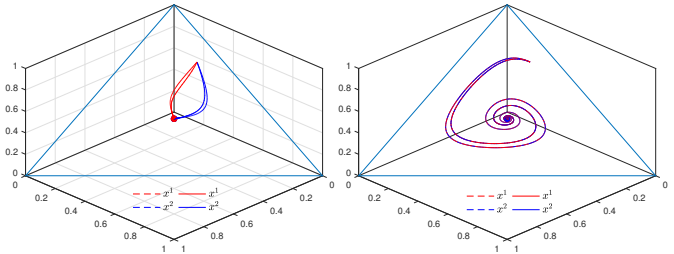

\vspace{-0.3cm}
		\hfill
		\subfloat[Two-player standard RPS, $l = 1$]{\includegraphics[width=0.5\linewidth]{rps}}
		\hfill
		\subfloat[Two-player unstable RPS, $l = 5$]{\includegraphics[width=0.5\linewidth]{rps_two_10}}
		\hfill
		\caption{Two-player standard and unstable RPS game, $\epsilon =1$}
		\label{fig:two_player_RPS_1_and_5}
\vspace{-0.3cm}
	\end{figure}
	We study the critical value of $\epsilon$ for which bifurcation occurs  by computing the eigenvalues of the Jacobian of  \eqref{eqn:first_order_score_dyn} and \eqref{eqn:closed_loop_feedback_system_2_p_2nd} at equilibrium. For $l=5$ the critical  $\epsilon$ value for bifurcation for the first-order dynamics  \eqref{eqn:first_order_score_dyn} is $\epsilon^*=\frac{2}{3}$, while numerically for \eqref{eqn:closed_loop_feedback_system_2_p_2nd} is  $\epsilon^*=0.347$, with stable dynamics for $\epsilon > \epsilon^*$.  In \autoref{two_player_5_lines_0_5} we show the error plots from equilibrium for  $\epsilon =0.5$ for both learning dynamics.  
	It can be seen that the higher-order dynamics \eqref{eqn:closed_loop_feedback_system_2_p_2nd}  converges, while \eqref{eqn:first_order_score_dyn} cycles, confirming the conclusion that for the same $\epsilon$ the higher-order dynamics can converge in a larger class of games (more hypo-monotone).
	\begin{figure}[htp!]
\vspace{-0.3cm}
		\centering
	\includegraphics[width= 0.8\linewidth]{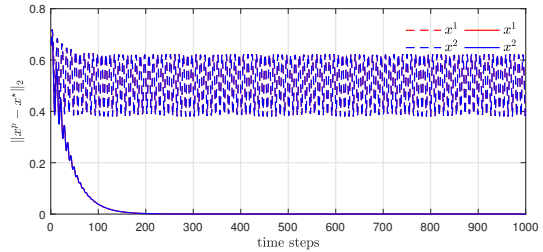} 	
	\caption{Two-player unstable RPS game, $l = 5$, $\epsilon=0.5$}
	\label{two_player_5_lines_0_5}
\vspace{-0.3cm}
	\end{figure}
		
\end{example}

\begin{example} Two-player Shapley Game

Consider the Two-player Shapley game discussed in \cite{Leslie}. The payoff matrices of this game are given by, 	
	\begin{equation}
		\mathbb{A} = \begin{bmatrix*}[r] 0 & 1 &  0 \\ 0 & 0  & 1 \\ 1 &  0 &  0  \end{bmatrix*} \quad \mathbb{B} = \mathbb{A}^\top .
	\end{equation}
and this is as $l=0$ in the RPS game (Example \ref{two_rps_game}), so $\mu =\frac{1}{2}$, an unstable game. 
In Fig.  \ref{fig:convergence_plot_rps_vs_shapley}(a) and (b) we compare the strategy trajectories  for player 1 under \eqref{eqn:first_order_score_dyn}  with  $\epsilon =0.1$, in the standard RPS game ($l=1$) (a) and in  the Shapley game ($l=0$) (b), respectively. We note that  in  the Shapley game a cycle is formed corresponding to Shapley triangle as in  \cite{Leslie}. 
\vspace{-0.1cm}
 		\begin{figure}[h]
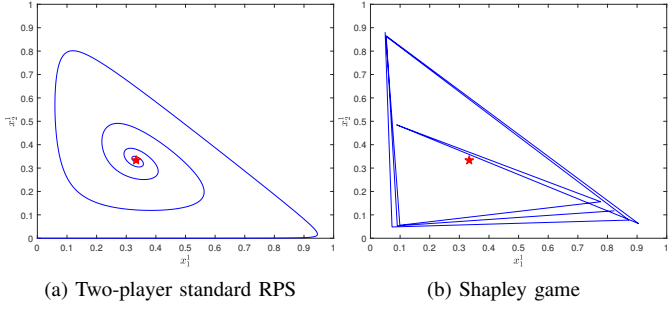

\vspace{-0.3cm}
		\hfill
		\subfloat[Two-player standard RPS]{\includegraphics[width=0.5\linewidth]{two_player_rps_epsilon_0_1}}  
		\hfill
		\subfloat[Shapley game]{\includegraphics[width=0.5\linewidth]{shapley_game_epsilon_0_1_first_90}}  
		\hfill
		\caption{Two-player standard RPS and Shapley game, $\epsilon =0.1$}
		\label{fig:convergence_plot_rps_vs_shapley}
\vspace{-0.3cm}
	\end{figure}
\end{example}
However, by \autoref{thm:stable_1} and \autoref{prop:exponentially_stable_prop},   both learning dynamics  are guaranteed to converge for $\epsilon > \frac{1}{2}$. In \autoref{shapley} we show strategy trajectories under  \eqref{eqn:first_order_score_dyn} and \eqref{eqn:closed_loop_feedback_system_2_p_2nd}   for $\epsilon =1$; it can be seen that both \eqref{eqn:first_order_score_dyn} (dotted line) and  \eqref{eqn:closed_loop_feedback_system_2_p_2nd} (solid line) converge. Unlike \cite{Leslie}, there is no need for player-dependent learning rates that satisfy a singular perturbation specific relationship, (cf. PDRL in Proposition 5.4, \cite{Leslie}).
\vspace{-0.3cm}
	\begin{figure}[htp]
		\centering
				\includegraphics[width= 0.5\linewidth]{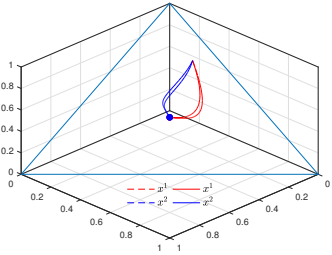} 	
		\caption{Two-player Shapley game, $\epsilon =1$}
		\label{shapley}
	\end{figure}

\begin{example} Three-Player Network Zero-Sum Game
	
Consider a network represented by a finite, fully connected, undirected graph $\mathbb{G} = (\mathcal{N}, \mathcal{E})$ where $\mathcal{N}$ is the set of vertices  (players) and $\mathcal{E} \subset \mathcal{N} \times \mathcal{N}$ is the set of edges. 
Given two vertices (players) 
$p,q \in \mathcal{N}$, 
we assume that there is a zero-sum game on the edge 
$(p,q)$ given by the payoff matrices $(\mathbb{A}^{p,q}, \mathbb{A}^{q,p})$, whereby $\mathbb{A}^{q,p} = -{\mathbb{A}^{p,q}}$. Assume that $N=3$ players are arranged in network $\mathbb{G}$ as shown in \autoref{three_player_graph} and the payoff matrix is that of a Matching Pennies (MP) game, \vspace{-0.2cm}\begin{equation}
\label{eqn:network_mp_game}
\mathbb{A}(k) = \begin{bmatrix*}[r] +k & -k \\ -k & +k \end{bmatrix*}.
\end{equation}
with the Nash equilibrium at 
$ \begin{bmatrix} \frac{1}{2}  & \frac{1}{2} \end{bmatrix}^\top $. 
	\begin{center}
		\begin{figure}[htp!]
\vspace{-0.2cm}
				\centering
			\includegraphics[width = 0.25\linewidth]{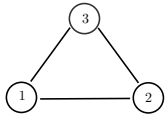}
			\caption{Three-Player Network Zero-Sum Game}
			\label{three_player_graph}
\vspace{-0.4cm}
		\end{figure} 
	\end{center}

\vspace{-0.4cm}
 Let the payoff matrices for each edge be given as,\vspace{-0.2cm}
\[ \begin{array}{lll}%
\mathbb{A}^{1,2} = \mathbb{A}(1) & \mathbb{A}^{1,3} = \mathbb{A}(2) & \mathbb{A}^{2,3} = \mathbb{A}(3)\\   
\mathbb{A}^{2,1} = -{\mathbb{A}^{1,2}}  &  \mathbb{A}^{3,1} = -{\mathbb{A}^{1,3}}   &  \mathbb{A}^{3,2} = -{\mathbb{A}^{2,3}}
\end{array}\]
Since each pair-wise interaction between players is a zero-sum game, the payoff vector of the overall player set is given by \vspace{-0.2cm}
$$U(x) =  \begin{bmatrix} U^1(x) \\ U^2(x)\\U^3(x) \end{bmatrix} =  \begin{bmatrix} 0 &  \mathbb{A}^{1,2} & \mathbb{A}^{1,3}\\ -{\mathbb{A}^{1,2}}^\top  & 0 & \mathbb{A}^{2,3} \\ -{\mathbb{A}^{1,3}}^\top  & -{\mathbb{A}^{2,3}}^\top  & 0 \end{bmatrix}\begin{bmatrix}\vphantom{-{\mathbb{A}^{1,2}}^\top } x^1 \\ x^2 \\ x^3  \vphantom{-{\mathbb{A}^{1,2}}^\top }\end{bmatrix}.
$$
Thus $U(x) = \Phi x$, where $\Phi+\Phi^T=0$, so that this game is a null monotone game, cf. \autoref{assump:payoff_monotonicity}(i). Hence, by \autoref{thm:stable_1} and \autoref{prop:exponentially_stable_prop}, \eqref{eqn:first_order_score_dyn} and \eqref{eqn:closed_loop_feedback_system_2_p_2nd}, respectively, converge for any $\epsilon >0$. Strategy trajectories of \eqref{eqn:first_order_score_dyn} (dotted line) and \eqref{eqn:closed_loop_feedback_system_2_p_2nd} (solid line) are plotted in \autoref{network_mp}, indicating 
convergence. 
\vspace{-0.1cm}
	\begin{figure}[htp!]
\vspace{-0.3cm}
		\centering
		\includegraphics[width= 0.8\linewidth]{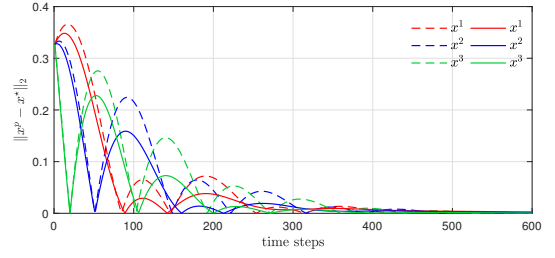}
		\caption{Three-Player Network MP game}
		\label{network_mp}
\vspace{-0.4cm}
	\end{figure}
		
\end{example}

\begin{example} Jordan Three-Player Matching Pennies Game
	
 The three-player Jordan game is a generalization of the Two-Player Matching Pennies game \cite[p. 22]{Fudenberg}. In this game, each player flips a coin and reveals them simultaneously. Player $1$ wins if the outcome matches that of player $2$. Player $2$ wins if the outcome match that of player $3$. Player $3$ wins if the outcome does not match that of player $1$. Let player $1$ be the row player, and player $2$ be the column one. 
 The payoff matrices can be represented as in  \autoref{table:jordan_payoff}.	 
	\begin{table}[h]
\vspace{-0.2cm}
		\centering
		\setlength{\extrarowheight}{1.5pt}
		\begin{tabular}{*{4}{c|}}
			\multicolumn{2}{c}{} & \multicolumn{2}{c}{Player $3$ chooses H}\\\cline{3-4}
			\multicolumn{1}{c}{} &  & $H$  & $T$ \\\cline{2-4}
			\multirow{2}*{}  & $H$ & $+1,+1,-1$ & $-1,-1,-1$ \\\cline{2-4}
			& $T$ & $-1,+1,+1$ & $+1,-1,+1$ \\\cline{2-4}
		\end{tabular}
		\thinspace
		\begin{tabular}{*{4}{c|}}
			\multicolumn{2}{c}{} & \multicolumn{2}{c}{Player $3$ chooses T}\\\cline{3-4}
			\multicolumn{1}{c}{} &  & $H$  & $T$ \\\cline{2-4}
			\multirow{2}*{}  & $H$ & $+1,-1,+1$ & $-1,+1,+1$ \\\cline{2-4}
			& $T$ & $-1,-1,-1$ & $+1,+1,-1$ \\\cline{2-4}
		\end{tabular}
\vspace{-0.1cm}
		\caption{Payoff Matrices of Jordan MP game}
		\label{table:jordan_payoff}
		\vspace{-0.2cm}
	\end{table}
The unique Nash equilibrium (and the Nash distribution/logit equilibrium) is at $\begin{bmatrix} \frac{1}{2} & \frac{1}{2} \end{bmatrix}^\top $ for each player.  
Even though the payoff vector is no longer linear, it can be shown that it satisfies \autoref{assump:payoff_monotonicity}(i). 
The payoff vector for the player set can be written as\begin{equation*}U(x) =  \begin{bmatrix} U^1_1(x) \\ U^1_2(x) \\ U^2_1(x) \\ U^2_2(x) \\  U^3_1(x) \\ U^3_2(x)\end{bmatrix} = \begin{bmatrix} \hphantom{-} x_1^2x_1^3 - x_2^2x_1^3 + x_1^2x_2^3 - x_2^2x_2^3\\ -x_1^2x_1^3 + x_2^2x_1^3 - x_1^2x_2^3+ x_2^2x_2^3\\ \hphantom{-}x_1^1x_1^3 + x_2^1x_1^3 - x_1^1x_2^3 - x_2^1x_2^3 \\ -x_1^1x_1^3 - x_2^1x_1^3 + x_1^1x_2^3 + x_2^1x_2^3\\  -x_1^1 x_1^2 + x_2^1 x_1^2 - x_1^1 x_2^2 + x_2^1 x_2^2 \\ \hphantom{-}x_1^1 x_1^2 - x_2^1 x_1^2 + x_1^1 x_2^2 - x_2^1 x_2^2\end{bmatrix}.\end{equation*}

Note that $\!U^1_1(x)\!= \!x_1^2(x_1^3 \!+ \!x_2^3) \!-\! x_2^2(x_1^3 \!+\!x_2^3) \!= \! x_1^2\!-\! x_2^2 \!=\! [1 \, \, -1 ] x^2$, 
since $x^3 \in \Updelta^3$, $x_1^3 +x_2^3=1$, and $ U^1_2(x)= -  U^1_1(x)$.
Similarly, $U^2_1(x) = (x_1^1 + x_2^1) x_1^3 -( x_1^1 + x_2^1)x_2^3 = x_1^3 -x_2^3 = [1 \, \, -1 ] x^3 $ and  $ U^2_2(x) = -  U^2_1(x)$. Also, $U^3_1(x)  = -x_1^1 (x_1^2+ x_2^2)  + x_2^1 (x_1^2 + x_2^2) =  -x_1^1 + x_2^1 =  [-1 \, \, 1 ] x^1$  and $U^3_2(x) = -U^3_1(x)$. 
Thus, with $x  = (x^1, x^2,  x^3)$, 
\begin{equation*}U(x)  
= \left [\begin{array}{cccccc}
 0 & 0 & 1 & -1 & 0 & 0 \\
 0 & 0 &-1 & 1 & 0 & 0 \\
 0 & 0 & 0 &  0 & 1 & -1 \\
 0 & 0 & 0 & 0 & -1 & 1 \\
 -1 & 1 & 0 & 0 & 0 & 0 \\
 1 & -1 & 0 & 0 & 0 & 0
\end{array} \right ] x :=  \Phi x.
\end{equation*}

\begin{figure}[htp!]
\vspace{-0.3cm}
	\centering
	\includegraphics[width= 0.8\linewidth]{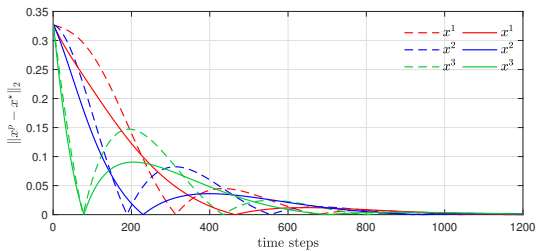}
	\caption{Three-player Jordan MP game, $\epsilon=1$}
	\label{jordan_mp}
\vspace{-0.2cm}
\end{figure}

The eigenvalues of $\Phi +  \Phi^\top$  are $\{ -4, 2,2,0,0,0 \}$. Thus $\mu = 1$ and the learning dynamics are guaranteed to converge for any $\epsilon >1$ (cf. \autoref{thm:stable_1} and \autoref{prop:exponentially_stable_prop}, under cf.  \autoref{assump:payoff_monotonicity}(ii)). Figure \autoref{jordan_mp}  shows strategy trajectories of all players under  \eqref{eqn:first_order_score_dyn} (dotted line) and \eqref{eqn:closed_loop_feedback_system_2_p_2nd} (solid line), indicating  convergence to the Nash equilibrium. 

\end{example}

In order to study the behaviour of  EXP-D-RL  \eqref{eqn:first_order_score_dyn}  and  H-D-EXP-RL\eqref{eqn:closed_loop_feedback_system_2_p_2nd} in some difficult games, in the following we consider two modified RPS games (as in \cite{BenaimHofbauerHopkins2009}), and a modified  asymmetric Jordan game (as in 
\cite{ShammaTAC2005}).

\begin{example} Modified RPS Game

Consider now two modified  (generalized) RPS games as  in \cite{BenaimHofbauerHopkins2009}). The first game has payoff matrix $\mathbb{A} = \begin{bmatrix*}[r] 0 & -1 & 3 \\ 2 & 0 & -1 \\ -1 & 3 & 0 \end{bmatrix*}$,  with unique Nash equilibrium at $x^\star = (0.40625, 0.3125, 0.28125)$, while the other game has payoff matrix  
$\overline{\mathbb{A}} = \begin{bmatrix*}[r] 0 & -3 & 1 \\ 1 & 0 & -2 \\ -3 & 1 & 0 \end{bmatrix*}$ (monocyclic), respectively, and the Nash equilibrium is at $x^\star = (0.28125, 0.3125, 0.40625)$. The eigenvalues of $\mathbb{A} + \mathbb{A} ^T$ are at $\{    3.3723, 
   -2.3723,   -1 \}$, while the eigenvalues of $\overline{\mathbb{A}}+ \overline{\mathbb{A}}^T$ 
are at  $\{    -3.3723,     2.3723,     1\}$, with the $-1$ and $+1$ eigenvalues corresponding to an eigenvector in the tangent space. Thus the first modified RPS game $\mathbb{A}$ is stable (monotone), while the second modified RPS game $\overline{\mathbb{A}}$ is unstable (hypo-monotone with $\mu =0.5$). By \autoref{thm:stable_1} and \autoref{prop:exponentially_stable_prop}, in the first RPS game $\mathbb{A}$, the two dynamics, \eqref{eqn:first_order_score_dyn}  and  \eqref{eqn:closed_loop_feedback_system_2_p_2nd}, are guaranteed to converge to a Nash distribution for any $\epsilon >0$, while in the second RPS game $\overline{\mathbb{A}}$, the two dynamics are guaranteed to converge to a Nash distribution for any $\epsilon >0.5$.  In Fig. \ref{fig:Benaim_RPS_games_eps1} and Fig. \ref{fig:Benaim_RPS_games_eps0_2} we show simulations in both games (side by side (a) and (b)), for $\epsilon =1$ and $\epsilon =0.2$, respectively, with trajectories for  \eqref{eqn:first_order_score_dyn} in blue  and those for  \eqref{eqn:closed_loop_feedback_system_2_p_2nd} in red, respectively. It can be seen that for $\epsilon =1$ in both games, both dynamics converge to the corresponding Nash distribution (logit equilibrium), which in either game does not coincide with the Nash equilibrium (marked with $\star$ in the plots).  For  $\epsilon = 1$, in the first RPS game $\mathbb{A}$, the Nash distribution is at $\overline{x}^\star=(0.379, 0.2997, 0.3213)$, while for the second RPS game $\overline{\mathbb{A}}$,  the Nash distribution is at $\overline{x}^\star=(0.2741, 0.3647, 0.3612)$. 
On the other hand for $\epsilon =0.2$,  in the first RPS game $\mathbb{A}$, the Nash distribution  is  $\overline{x}^\star=(0.4025, 0.3024, 0.2951)$ (very close to the Nash equilibrium $x^\star$), and as seen in Fig. \ref{fig:Benaim_RPS_games_eps0_2}(a),   both dynamics converge to it. This cannot be said for the second  RPS game $\overline{\mathbb{A}}$ for  $\epsilon =0.2$; as seen in  Fig. \ref{fig:Benaim_RPS_games_eps0_2}(b), the first-order dynamics \eqref{eqn:first_order_score_dyn} has a limit cycle, while the higher-order dynamics  \eqref{eqn:closed_loop_feedback_system_2_p_2nd} converges  to the corresponding Nash distribution, which is $\overline{x}^\star= (0.2653, 0.3237, 0.4109)$ (very close to the corresponding Nash equilibrium $x^\star$). This confirms our previous observations regarding the better performance of the higher-order dynamics  \eqref{eqn:closed_loop_feedback_system_2_p_2nd}. We note that  for  $\epsilon =0.1$, in the second RPS game  $\overline{\mathbb{A}}$, both dynamics have a limit cycle.

		\begin{figure}[h]
\vspace{-0.4cm}
		\hfill
		\subfloat[Modified RPS game $\mathbb{A}$ ]{\includegraphics[width=0.5\linewidth]{rps_game_A_matrix_epsilon_1}}  
				\hfill
		\subfloat[Modified RPS game $\overline{\mathbb{A}}$ ]{\includegraphics[width=0.5\linewidth]{rps_game_B_matrix_epsilon_1}}  
		\hfill
		\caption{Modified RPS game $\mathbb{A}$  and $\overline{\mathbb{A}}$, \cite{BenaimHofbauerHopkins2009}, $\epsilon =1$}
		\label{fig:Benaim_RPS_games_eps1}
\vspace{-0.3cm}
	\end{figure}
\vspace{-0.1cm}
		\begin{figure}[h]
\vspace{-0.4cm}
		\hfill
		\subfloat[Modified RPS game $\mathbb{A}$ ]{\includegraphics[width=0.5\linewidth]{rps_game_A_matrix_epsilon_1_5}}  
		\hfill
		\subfloat[Modified RPS game $\overline{\mathbb{A}}$ ]{\includegraphics[width=0.5\linewidth]{rps_game_B_matrix_epsilon_1_5}}  
		\hfill
		\caption{Modified RPS game $\mathbb{A}$  and $\overline{\mathbb{A}}$, \cite{BenaimHofbauerHopkins2009}, $\epsilon =0.2$}
		\label{fig:Benaim_RPS_games_eps0_2}
	\end{figure}

\end{example}

\begin{example} Modified Three-Player Jordan Game

Consider a three-player modified  (asymmetric) Jordan game as in 
\cite{ShammaTAC2005}, where  
$U^1(x) =  \left [\begin{array}{cc}
0 & 2  \\
1 & 0 
\end{array} \right ] x^2$, 
$U^2(x) =  \left [\begin{array}{cc}
0 & 1  \\
1 & 0 
\end{array} \right ] x^3$, and 
$U^3(x) =  \left [\begin{array}{cc}
0 & 1/3  \\
1 & 0 
\end{array} \right ] x^1$. The unique Nash equilibrium is at $(\frac{1}{4},\frac{3}{4}, \frac{2}{3},\frac{1}{3},\frac{1}{2},\frac{1}{2})$. In Fig. \ref{jordan_mp_modif} we display the strategy trajectory plots the three players for $\epsilon =0.1$, which show that the first-order dynamics \eqref{eqn:first_order_score_dyn} (dotted line) has a limit cycle, while the higher-order dynamics  \eqref{eqn:closed_loop_feedback_system_2_p_2nd} (solid line) converges to the Nash distribution, close to  Nash equilibrium. 

\begin{figure}[htp!]
\vspace{-0.3cm}
	\centering
	\includegraphics[width= 0.8\linewidth]{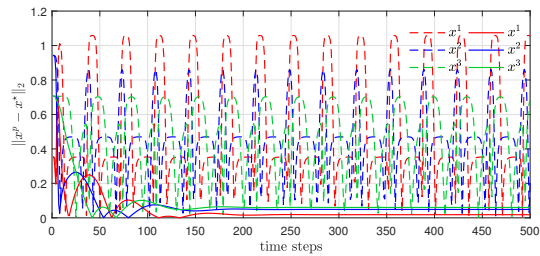}
	\caption{Modified Three-player Jordan game, $\epsilon=0.1$}
	\label{jordan_mp_modif}
\vspace{-0.4cm}
\end{figure}

\end{example}

\vspace{-0.2cm}
\section{Conclusion}

This paper presents a passivity-based approach for analyzing  first-order exponentially-discounted reinforcement learning  (EXP-D-RL) dynamics.
We have shown convergence for games characterized by their monotonicity property. We further exploited passivity properties to propose  a class of higher-order schemes that  preserve convergence properties, can improve  the speed of convergence and, as shown numerically, can even converge in cases whereby their first-order counterpart fail to converge. We demonstrated these properties through numerical simulations for several representative games.  

\vspace{-0.2cm}


%

\begin{IEEEbiography}[{\includegraphics[width=1in]{Bo_picShot}}]{Bolin Gao}
 is currently a Ph.D. student in the Systems Control Group at the University of Toronto. 
He received his B.A.Sc. (with Hons.) and M.A.Sc. degrees in Electrical Engineering from the University of Toronto, in 2015 and 2017, respectively. 
His research interests include design and control of game algorithms with applications to reinforcement learning in multi-agent systems. 
\end{IEEEbiography}

\begin{IEEEbiography}[{\includegraphics[width=0.9in]{LP_photo}}]{Lacra Pavel}
(M'92 - SM'04)  received the Dipl. Engineer from Technical University of Iasi, Romania and the Ph.D. degree in Electrical Engineering from Queen's University, Canada.  After a postdoctoral stage at the National Research Council and four years of industry experience, in 2002 she joined University of Toronto, where she is now a Professor in the Systems Control Group, Department of Electrical and Computer Engineering. Her research interests are in game theory and distributed optimization in networks, with emphasis on dynamics and control.  She is the author of  the book {\em Game Theory for Control of Optical Networks} (Birkh\"{a}user-Springer Science, ISBN 978-0-8176-8321-4, 2012). She is an Associate Editor of \emph{IEEE Transactions on Control of Network Systems} and a Member of the Conference Editorial Board of the IEEE Control Systems Society; she acted as Publications Chair of the 45th IEEE Conference on Decision and Control.
\end{IEEEbiography}

\end{document}